\newcommand{\roll}{\mathrm{roll}}
\newcommand{\step}{\mathrm{step}}
\newcommand{\lrp}[1]{\left(#1\right)}
\newcommand{\floor}[1]{\left \lfloor#1 \right \rfloor}
\newcommand{\ceil}[1]{\left \lceil#1\right \rceil}
\newcommand{\defeq}{\vcentcolon=}
\newcommand{\ds}{\displaystyle}
\newcommand{\dsum}{\ds\sum}
\newcommand{\Center}{Center }
\newcommand{\xsf}{\mathsf{x}}
\newcommand{\fsf}{\mathsf{f}}
\newcommand{\alphasf}{\mathsf{\alpha}}
\newcommand{\betasf}{\mathsf{\beta}}
\newcommand{\ysf}{\mathsf{y}}
\newcommand{\vsf}{\mathsf{v}}
\newcommand{\bargeNumUnloads}{\underbar{bul}^{\max}}
\newcommand{\Nonlinear}{\framebox{Bi-linear} }%
\newcommand{\rat}{\text{rat}}
\newcommand{\INB}{y^{\text{in}}}
\newcommand{\INV}{v^{\text{end}}}
\newcommand{\SPL}{v^{\text{mid}}}
\newcommand{\FEED}{y^{\text{out}}}
\newcommand{\vd}{v^{\text{prod}}}
\newcommand{\relaxation}{\mathrm{relax}}
\newcommand{\SPEC}{f}
\newcommand{\SPECd}{\SPEC^{\text{prod}}}
\newcommand{\thalf}{t}
\renewcommand{\j}{{\kappa}}
\newcommand{\misInbPenalty}{\underbar{c}^{\mathrm{inb}}}%
\newcommand{\KQT}{}%
\newcommand{\misPenalty}{\underbar{c}^{\mathrm{dmd}}}%
\newcommand{\minTankFeedPct}{\underbar{tkp}^{\min}}%
\newcommand{\maxUnloadsPerDay}{\underbar{ul}^{\max}}%
\newcommand{\SVOLINV}{\vsf^{\fsf,\text{end}}}
\newcommand{\SVOLINVdelta}{\vsf^{\Delta\fsf,\text{end}}}
\newcommand{\SVOLSPL}{\vsf^{\fsf,\text{mid}}}
\newcommand{\SVOLSPLdelta}{\vsf^{\Delta\fsf,\text{mid}}}
\newcommand{\SVOLFEED}{\ysf^{\fsf,\text{out}}}
\newcommand{\SVOLFEEDdelta}{\ysf^{\Delta\fsf,\text{out}}}
\newcommand{\AVOLINV}{\vsf^{\alpha,\text{end}}}
\newcommand{\AVOLSPL}{\vsf^{\alpha,\text{mid}}}
\newcommand{\AVOLFEED}{\ysf^{\alpha,\text{out}}}
\newcommand{\spec}{\underbar{\SPEC}}
\newcommand{\demand}{\underbar d}
\newcommand{\MIS}{d^{\text{unmet}}}%
\newcommand{\invInit}{\underbar{v}}
\DeclareMathOperator    \proj           {proj}
\newcommand{\bb}{\mathbb}
\newcommand{\old}[1]{{}}
\newcommand{\R}{\bb R}
\newcommand{\specRat}{\underbar{r}}
\newcommand{\invInb}{\underbar{v}}
\newcommand{\MISINB}{v^{\text{unused}}}
\newcommand{\MINUNLTIME}{t^{\text{first}}}%
\newcommand{\MAXUNLTIME}{t^{\text{last}}}%
\newcommand{\maxUnloadTimeGap}{\underbar{ult}^{\max}}%
\newcommand{\minDailyUnloadPct}{\underbar{ulp}^{\min}}%
\newcommand{\low}{\min}
\newcommand{\upp}{\max}
\newcommand{\tfix}{fixed}
\newcommand{\tuse}{active}
\newcommand{\trelax}{relaxed}
\newcommand{\tomit}{omitted}
\newtheorem{theorem}{Theorem}
\newtheorem{proposition}[theorem]{Proposition}
\title{An Approximate Method for the Optimization of Long-Horizon Tank Blending and Scheduling Operations}%
\author{Benjamin Beach\footnote{Grado Department of Industrial and Systems Engineering, Virginia Tech. Corresponding author, e-mail: \texttt{bben6@vt.edu}},  
Robert Hildebrand\footnote{Grado Department of Industrial and Systems Engineering, Virginia Tech},
Kimberly Ellis\footnotemark[2]%
, and %
Baptiste Lebreton\footnote{Operations Research Team Leader, Eastman Chemical Company, Kingsport, TN, email: blebreton@eastman.com}
}
\begin{document}

\maketitle

\begin{abstract}
    We address a challenging tank blending and scheduling problem regarding operations for a chemical plant. We model the problem as a nonconvex MIQCP, then approximate this model as a MILP using a discretization-based approach. We combine a rolling horizon approach with the discretization of individual chemical property specifications to deal with long scheduling horizons, time-varying quality specifications, and multiple suppliers with discrete arrival times. This approach has been evaluated using industry-representative data sets from the specialty chemical industry.
    We demonstrate that the proposed approach supports fast planning cycles.
\end{abstract}

\thispagestyle{firstpage}

\section{Introduction}
\label{sec:introduction}

For speciality chemical manufacturing, companies purchase and blend supply streams from a variety of suppliers. Because the supply streams are derivatives of refinery operations, the chemical composition (specifications) of the streams may vary significantly by supplier as well as by season. Supply streams may have both desirable properties and less desirable properties that prevent their exclusive use during the production process.  Furthermore, some streams might have limited available quantities or intermittent availability.  Thus, supply streams of differing composition are often combined into intermediate storage tanks.  The resulting compounds are then blended to meet demand for production feed (see Figure~\ref{fig_probdesc}).  

For our motivating application, supply streams typically arrive in barges from suppliers and may remain at the inbound docks for several days before unloading into one or multiple storage  tanks.  The time windows for the arrival and departure of barges are committed several months in advance.  As the supply barges are unloaded into the tanks, the volume and chemical composition of the compounds in the storage tanks are changed. The compounds from one or multiple storage tanks are then blended to create the desired quantity and specification of production feed.  The production feed is planned in product campaigns (runs), typically spanning 3-14 days per product.  Approximately 3 to 12 months of product campaigns are planned in advance due to the wide variety of spec requirements over time for very different non-concurrent production runs, the non-periodic nature of the supply acquisition process, and the limited capacity of the storage tanks. %

Given the volume and characteristics of the compounds on the inbound barges, the time windows for the availability of barges, and the desired production feed (both quantity and specifications), the company must determine an assignment of supply barges to tanks, an unloading plan from barges to tanks, and a blending plan from tanks to production feed. Although similar problems are often inherently multi-objective, the most important objective by far is to satisfy production requirements and to unload supply. Hence, the objective is to determine an operational schedule for the given barge arrivals and production plan. Rapid solutions are required for long scheduling horizons to support effective planning decisions.  Thus, we propose an efficient optimization-based approach to provide solutions for this assignment, mixing, and blending problem within ten minutes for a planning horizon of up to a year.

\begin{figure}[H]
\begin{minipage}{\textwidth}
	\centering
\begin{tikzpicture}[scale = 1.0]

\draw[thick,densely dashed, gray] (-2,-1) rectangle (10.8,6.0);

\node (A)at (0,0) {\includegraphics[scale=.08]{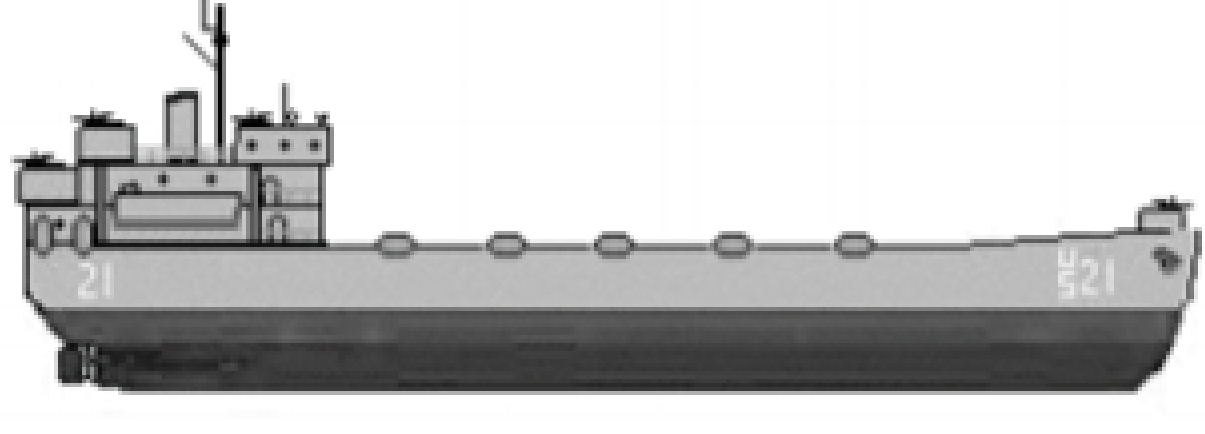}};
\node[fill=white,inner sep=1pt,rounded corners] at (0,0.4){ Barge $|S|$};
        
\node (B)at (0,2.5) {\includegraphics[scale=.08]{boat-image.png}};

\node[fill=white,inner sep=1pt,rounded corners] at (0,2.9){ Barge $s$};

\node[fill=white,inner sep=1pt,rounded corners] at (0,2.2){ $\invInb_s$, $\spec_{s,q}$};%

\node (C)at (0,5) {\includegraphics[scale=.08]{boat-image.png}};

\node[fill=white,inner sep=1pt,rounded corners] at (0,5.4){ Barge $1$};

\draw[dotted,line width=2.5pt,gray] (0,4) -- ++(0,-.45);
        
\draw[dotted,line width=2.5pt,gray] (4.5,4.1) -- ++(0,-.45);
        
\draw[dotted,line width=2.5pt,gray] (0,1.5) -- ++(0,-.45);
        
\draw[dotted,line width=2.5pt,gray] (4.5,1.5) -- ++(0,-.45);

\begin{scope}[shift={(4.5,4.5)}]
\draw[left color=black!40,draw=black!75] (1,0) arc(0:-180:1 and 0.1) -- (-1,.8)arc(-180:0:1 and 0.1) -- (1,0) node[midway](TK1){};
\draw[fill=black!5,draw=black!75] (0,0.8) ellipse (1 and 0.1) node[midway,above=1cm]{ Tank 1} ;
\end{scope}

\begin{scope}[shift={(4.5,2)}]
\draw[left color=blue!20,draw=black!75] (1,0) arc(0:-180:1 and 0.1) -- (-1,.8)arc(-180:0:1 and 0.1) -- (1,0) node[midway](TK2){};
\draw[fill=black!5,draw=black!75] (0,0.8) ellipse (1 and 0.1)node[midway,above=1cm]{ Tank $k$} node[midway,above, text width=2cm,align=center,black]{$v_{k,t}$,  %
$\SPEC_{k,q,t}$ };%
\end{scope}

\begin{scope}[shift={(4.5,-0.5)}]
\draw[left color=black!40,draw=black!75] (1,0) arc(0:-180:1 and 0.1) -- (-1,.8)arc(-180:0:1 and 0.1) -- (1,0) node[midway](TK3){};
\draw[fill=black!5,draw=black!75] (0,0.8) ellipse (1 and 0.1) node[midway,above=.85cm]{ Tank $|K|$};
\end{scope}

\node[fill=white] at (8.6,4.2) {Production Feed};
\draw[-latex,thick,gray] (C.-5) -- ++(1.65,-4.8);
\draw[-latex,thick,gray] (B.-5) -- ++(1.65,-2.5);
\draw[-latex,thick,gray] (A.-5) -- ++(1.65,0);

\draw[-latex,thick,gray] (C.-5) -- ++(1.65,0);
\draw[-latex, thick,gray] (B.-5) -- ++(1.65,2.5) ;
\draw[-latex, thick,gray] (A.-5) -- ++(1.65,4.8);

\draw[-latex,very thick,blue] (C.-5) -- ++(1.65,-2.5); %
\draw[-latex,very thick,blue] (B.-5) -- ++(1.65,0)node[midway,above,blue,fill=white, rounded corners,inner sep=0, fill opacity=.9]{ $\INB_{s,k,t}$};
\draw[-latex,very thick,blue] (A.-5) -- ++(1.65,2.5);

\node[draw,circle,fill=yellow!40,minimum height=1cm,minimum width=1.5cm] (GB1)at ($(B.east)+(6.5,-0.1)$){$\vd_t, \SPECd_{q,t} $};

\node[draw=white,fill=white,minimum height=1cm,minimum width=1.7cm] (GB2)at ($(B.east)+(10.2,-0.1)$){};

\node[minimum height=1cm,minimum width=1.5cm] at ($(B.east)+(10.2,0.1)$){\includegraphics[scale=0.08]{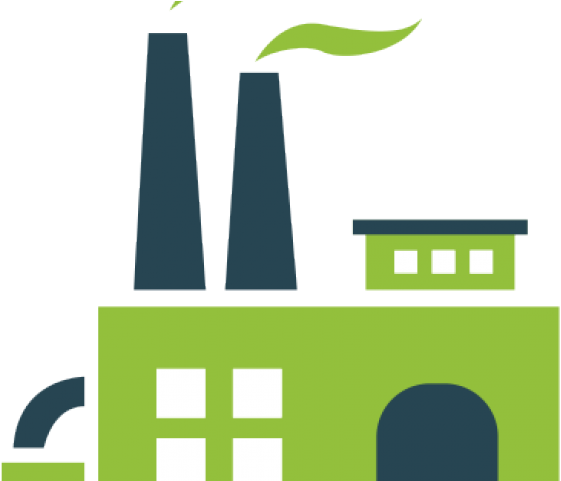}};
\node[below=.5cm] at (GB2) { Production};

\draw[-latex,thick] (TK1.center) -- (GB1.160);
\draw[-latex,thick] (TK2.center) -- (GB1.west) node[near start,right=0.4cm, above]{ $\FEED_{k,t}$};%
\draw[-latex,thick] (TK3.center) -- (GB1.200) ;

\draw[-latex,thick] (GB1.east) -- (GB2.west);%

\end{tikzpicture}      %
  \caption[We schedule the the front-end portion of the problem, before the production stage (as indicated by the large box). This includes the scheduling of when and where to unload each barge and the determination of flow plans from tanks to production feed.]{We schedule the the front-end portion of the problem, before the production stage (as indicated by the large box). This includes the scheduling of when and where to unload each barge and the determination of flow plans from tanks to production feed.} %
  \label{fig_probdesc}
  \end{minipage}
\end{figure}

Within the context of supply chain management, this problem encompasses short-term decisions as highlighted in the Supply Chain Planning Matrix (Figure~\ref{fig_planningmatrix}, adapted from \cite{Rohde2000}).

\begin{figure}[H]

\begin{minipage}{\textwidth}
	\centering
    \includegraphics[scale=0.7]{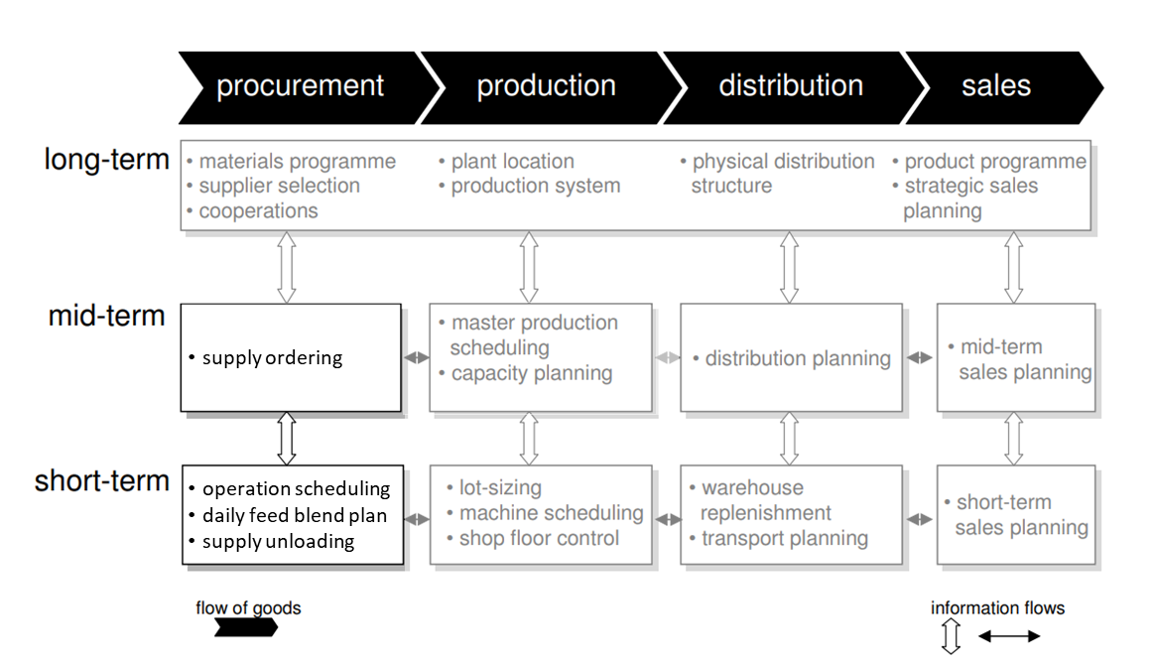} 
  \caption{Context of this work in terms of supply chain planning (adapted from \cite{Rohde2000}).}
  \label{fig_planningmatrix}
  \end{minipage}
\end{figure}

In this work, we employ a discrete-time mixed-integer linear program (MILP) approximate formulation of the core nonconvex mixed-integer quadratically-constrained program (MIQCP), formulated via direct tracking of specs, using single-day scheduling periods with scheduling horizons of 90 to 368 days. The `binary' version of this discretization combines aspects of multi-parametric disaggregation (MDT) \cite{Teles2011} with normalized MDT (NMDT, \cite{Castro2015c}), while avoiding the explicit tracking of spec qualities. The demand spec and spec ratio requirements are tightened to help ensure that the approximate representation of the specs does not lead to violations of the demand composition requirements. To satisfy the long-horizon scheduling goals within a reasonable computation time, we use a rolling-horizon solution approach.
After optimization of the model approximation, the scheduling solution is simulated to obtain the correct tank inventory and demand feed specs.

In the following sections, we provide a formal problem description, highlight the related research, formulate a mixed integer non-linear representation of the problem, propose approaches for solving the model, and compare the approaches with computational studies based on industry-representative data sets.

\section{Problem Description}
For a given a set $S$ of supply nodes (barges), assume each supply node $s$ contains a given volume $\invInb_s$ of raw material, with a level for chemical property $q$ of $\spec_{s,q}$, where $q \in Q$. The levels of these chemical properties are often specified as a percentage of the volume for a material component of interest. The percentages may not necessarily sum to 100\% of the material, because some components may overlap and others may not be of interest. Each supply node $s$ is available during a set $T_s$ of consecutive time periods, from $t^{\min}_s$ to $t^{\max}_s$. With the availability of all supply nodes known, the set of supply nodes available each time period $t$ is denoted as $S_t$. The number of barges that are unloaded each day is limited to $\maxUnloadsPerDay$. If a barge is unloaded in a given day, the minimum percentage of inventory that can be unloaded is $\minDailyUnloadPct_s$. If a supply barge has less than $\minDailyUnloadPct_s$ remaining, then the inventory is not unloaded and the barge departs.

A set $K$ of intermediate storage tanks is available to hold raw material or a blend of raw materials. Each tank $k$ has an initial inventory volume $v_{k,0}$, minimum inventory level $v_k^{\min}$, and maximum inventory level $v_k^{\max}$. The initial quality level for chemical property $q$ is denoted by $\SPEC_{k,q,0}$. Each tank can be supplied by the supply nodes in $K_s$. If the mixture in a tank $k$ is used to fulfill production feed, the minimum percentage of feed that can originate from the tank is $\minTankFeedPct_k$.
The demand for production feed is known for each time period $t \in T$. This demand is characterized by a volume $\demand_t$, bounds $\spec_{t,q}^{\min}$ and $\spec_{t,q}^{\max}$ on the quality level of the chemical properties, and bounds $\specRat_{t,q_1,q_2}^{\min}$ and $\specRat_{t,q_1,q_2}^{\max}$ on certain ratios of chemical properties. Production feed is planned in campaigns or runs of consecutive days.  During a run, $\spec_{t,q}^{\min}$ and $\spec_{t,q}^{\max}$ remain constant, and we require the daily flow plan for this feed to remain constant.

In order to meet production feed $\demand_t$, each time period, the volume of material transferred needs to be determined:  
\begin{itemize}
\item volume to unload from supply node $s$ to tank $k$ during time $t$, $\INB_{s,k,t}$ and
\item volume to unload from tank $k$ to meet production feed in time $t$, $\FEED_{k,t}$.
\end{itemize}

In addition, the following assignment variables on these flows are needed to address various operational constraints:
\begin{itemize}
\item if supply barge $s$ is unloaded during time $t$, then $\gamma_{s,t} = 1$.
\item If tank $k$ provides mixture to meet production feed in time $t$,  then $\sigma_{k,t} = 1$.
\end{itemize}

To model our objective of meeting demand and unloading requirements, we maximize a weighted sum of the amount of demand met and product unloaded.

Additional operational constraints include the following. The number of times a barge can be unloaded is limited to two, and all unload operations for the barge must be performed within a range of $\maxUnloadTimeGap$ time periods (e.g., seven days). %
Flow from tanks to meet production feeds must be continuous for each campaign (or demand run).

   Conservation of flow must be maintained, such that the sum of the initial tank inventory level and the volume of supply that flows into the tank less the volume that flows out for production feed equals the final tank inventory level. The resulting levels of chemical properties from this flow must meet the specified composition requirements. Due the relatively small sizes of the storage tanks in this work, we allow storage tanks to both receive supply and provide material for production feed simultaneously. If this occurs in a given time period, for the sake of simplicity, we model that complete mixing of inflows occurs before any outflows within that time period. The problem and constraints are formally specified in Section~\ref{sec:Mathematical-Model}.

\section{Literature Review \label{sec_litReview}}

The blending and scheduling problems, also known as multi-period pooling problems, studied in the literature vary widely in network size, scope, objective, and detail. Such problems, similar to the standing pooling problem, are typically characterized as bilinear nonconvex MIQCP's due to the balance of specs (or properties) in mixing and splitting of product. This is referred to as `linear blending', and is often an approximation of what occurs in practice (as e.g. the mixing process may not finish completely before material is split into multiple streams). However, similar problems can also involve linear \cite{Li2009} representation for spec balance, or a more highly nonlinear \cite{Zhang2002,Singh2000,Misener2009,Cuiwen2012} representations for balance of some specs (`nonlinear blending', as in the extended pooling problem), depending on the nature of the operational network or the specifications to track.

The underlying resource networks can be small and separated into discrete layers, and can represent front-end acquisition, as in e.g. \cite{Mouret2010,Oddsdottir2013,Chen2014,Cerda2015,Castro2016} and the current work; back-end operations and sales, as in e.g. \cite{Cuiwen2012}; or the full process from acquisition to sales, as in \cite{Mouret2010,Xu2017}. The networks can also span larger, more complex supply chains, as in \cite{Pinto2000,Castillo2017}. The scope of the problem can include scheduling only, as in \cite{Moro2004,Mendez2006,Kolodziej2013b,Castro2016} and the current work, but can also extend to include more high-level acquisition \cite{Oddsdottir2013}, production planning decisions \cite{LUO2009,Lu2015,Torkaman2017}, or more detailed equipment control decisions \cite{Dias2016}. Some works, such as \cite{GutirrezLimn2014,GutirrezLimn2016}, integrate all three levels of planning, scheduling, and control in a single model. 

Common examples of objective functions used in similar problems include maximization of profits \cite{Reddy2004a,Reddy2004b,Li2007,Kolodziej2012,Cerda2015,Castro2015b,Lotero2016,Castillo2017,Xu2017} or gross margins \cite{Mouret2010,Oddsdottir2013,Castro2014a,Gao2015,Castro2016}, or minimization of operating costs \cite{Mouret2010,Li2009,Li2010,Li2011,Castro2014a,Castro2016}. The objective function used in \cite{deAssis2017} in particular is similar to that in this work, but includes additional penalties for performing mixing operations, missing the demand composition requirements, and for not respecting the maintenance requirements. \cite{Cuiwen2012} maximizes the yields of final products. 
Many problems have only a single type of supply source, which can be characterized either by discrete arrivals \cite{Mouret2010,Oddsdottir2013,Chen2014,Castro2014a,Castro2016} or continuous supply streams e.g. \cite{Li2009,Li2011,Cuiwen2012,Castro2015b,Gao2015,Lotero2016}. Some works require multiple types of supply sources. For example, in \cite{Reddy2004a,Reddy2004b,Li2007,Cerda2015,Xu2017}, there are both small barges with a single type of inventory and larger barges, referred to as very large crude carriers (VLCCs), that carry multiple packages with varied compositions. Some works explicitly include vessel scheduling decisions to be made at unloading points, such as docks \cite{Reddy2004a,Li2007}. Most works assume known supply and demand quantities or bounds, but some, such as \cite{Neiro2006,LUO2009,Cuiwen2012,Gupta2014,Chu2015,Dias2016,Ning2017}, explicitly take into account stochastic supply and demand.

Muti-period pooling problems classically come in the forms of P, Q, or PQ formulations.  
The Q formulation is source-based, the P formulation is spec-based, and the PQ is a hybrid of the two. That is, the Q formulation tracks compositions based on origin fractions or volumes, while the P formulation tracks compositions explicitly based on the concentrations of the physical qualities to track.   See~\cite{Gupta2014} for a discussion of these models.
Some examples of spec-based formulations can be found in \cite{Cuiwen2012,Chen2014,Castro2015b}, while examples of source-based formulations can be found in \cite{Reddy2004a,Mouret2010,Gao2015,Cerda2015}. Comparing to the traditional pooling problem, a spec-based formulation mirrors the traditional P-formulation while a source-based formulation mirrors the Q or PQ-formulations. This choice of representation affects both the required number of bilinear terms and the MILP tightness of the formulation. Overall, the number of bilinear terms for a source-based formulation scales with the number of mixing or splitting nodes in the network and the number of distinct source compositions, while the number of bilinear terms for a spec-based formulation scales instead with the number of specs that require tracking. As such, this choice of representation can have a significant effect on both the tightness of the MIP and the number of required bilinear terms. While source-based formulations can be tighter \cite{Lotero2016}, they are only viable in situations where the number of distinct sources is relatively small. In this work, as many as 35 distinct source compositions can be represented within the scheduling horizon, rendering source-based formulations computationally expensive.

\subsection{Comparison to Similar Research}
We highlight and compare three research streams that are similar to this current work. Note that many differences between this work and others stem from the specialty chemical nature of our problem, combined with the properties of the production site for the application of interest.

Castro~\cite{Castro2016} studies a similar problem motivated by crude oil operations in refineries.  This problem is also addressed in~\cite{Lee1996,Mouret2009a,Mouret2010,Castro2014a}, among others.  The problem involves barge-based supply where the barges carry a single type of crude, arrive within given time intervals, and unload according to the arrival sequence.  The barges unload to storage tanks.  The material in the storage tanks are mixed into charging tanks that supply crude oil into charging tanks that supply crude oil distillation units (CDUs).

The similarities of the core problem include a similar network structure, a barge-based supply stream, and the capacity of each storage tank ($\sim$1-2 barges worth). Both works solve front-end operational scheduling problems and determine feed plans for downstream operations. In \cite{Castro2016}, such feeds are routed through crude oil distillation units (CDUs).  In this current work, the chemicals feed a single production feed.

However, the differences between the problems affect the modeling and solution approaches. In \cite{Castro2016}, the barges arrive at regular intervals and are unloaded in the order of arrival, with only one barge unloaded at a time. In the current work, multiple barges (up to a limit) may be unloaded simultaneously and the order of unloading needs to be determined. 
The shorter scheduling horizon in \cite{Castro2016} ($\le$15 days, $\le$3 barges) allows for solution over the entire scheduling horizon at once, while the long scheduling horizon (up to 365 days, $\sim$30 barges) in the current work necessitates a heuristic such as a rolling horizon scheme.
At the same time, the larger network in \cite{Castro2016} reduces the length of the scheduling horizon over which the problem can be solved within a reasonable time. A significant observation of \cite{Castro2016} is that, for the problem considered, the choice of objective function significantly impacted the relative efficacy of the discrete-time and continuous-time formulations. Another key difference is that the production schedule for each CDU is not fixed in \cite{Castro2016}, but is chosen based on gross margins per crude or on operating costs. On the other hand, a CDU can be sourced from only a single charging tank at a time. 

In Castro's work, the source-based choice is convenient because there are few composition types to track. In the current work, the source-based formulation is prohibitive due to the large amount of different possible composition types, and a spec-based formulation is used instead. As mentioned in \cite{Castro2016}, when viable, source-based formulations yield tighter MILP relaxations \cite{Lotero2016}, and can yield better computational performance \cite{Castro2015b}.%

Castro \cite{Castro2015b} addresses a similar multi-period pooling problem with composition constraints on the flows to demand stream (tested on both per-flow and a mixture bases).  Similar to the solution approach used in \cite{Castro2016}, Castro employs an  iterated MILP-NLP approach, where the MILP stage is represented using base-10 NMDT. However, the utilized objective functions and formulations differ. In \cite{Castro2015b}, a discrete-time formulation is used, and both spec-based and source-based formulations are presented, and compared with commercial global solvers. The objective function is profit maximization as opposed to gross margin maximization or cost minimization. The problem instance tested has two source streams, up to four blending/storage tanks for which tank-to-tank transfer operations are allowed, and two demand streams, and are solved with a time horizon of up to four periods.

Oddsdottir \cite{Oddsdottir2013} addresses a similar network, with the same structure (barges to storage tanks to demand (CDUs)). The main differences in this respect involve barge availability (less restrictive time windows), the number of storage tanks (6), and the number of demand streams (2). Another common feature is the requirement of a much longer scheduling horizon (3 months) and in the flexibility in barge arrivals. However, the goal in \cite{Oddsdottir2013} is related to procurement planning instead of operational scheduling. As such, the choice of which barges to order and when is a problem decision, and a courser time-grid is used (three days per period), resulting in a total of 30 time periods to model. Additionally, in both works, multiple barges might be unloaded within a single time period (up to a limit), possibly contributing to the choice of a discrete-time formulation. Further, a given barge may unload to multiple storage tanks in a given time period. As with \cite{Castro2016}, however, the limited number of crude types in the system in \cite{Oddsdottir2013} allow for a reasonably compact source-based formulation, which would be unwieldy in the current work.

\subsection{Related Solution Methodologies}

Rolling planning aside, the solution approach selected in \cite{Oddsdottir2013} is comparable to the approach in \cite{Castro2016}, but with some key differences. As in \cite{Castro2016}, a source-based formulation is used and composition constraints are formulated with respect to crude volumes instead of crude volume fractions. However, instead of discretizing, \cite{Oddsdottir2013} applies a MILP approximation to the problem in which the constraints enforcing correct proportions for the outflow composition are reformulated in a manner that allows composition discrepancy in the outflows. A nonconvex MIQCP stage is then applied to resolve the composition discrepancy, yielding near-optimal feasible solutions in a much shorter time frame than the full nonconvex MIQCP: Running in GAMS with BARON, over 20 datasets, the average solution gap for this PRONODIS method (compared to the full nonconvex MIQCP) was 1.1\%, with an average computation time of 20s (compared to over 13000s for the full nonconvex MIQCP).

A wide variety of formulations and approaches have been developed to tackle these difficult scheduling problems, with varying time-scheduling formulations, spec representations, and algorithms to deal with the problem nonlinearity, which usually incorporate some form of MILP approximation to the basic nonconvex MIQCP.

A number of continuous-time and discrete-time scheduling formulations have been presented. %
The choice of time-formulation can have a significant impact on problem performance, as showcased in works such as \cite{Reddy2004b} and \cite{Mouret2009a,Mouret2010}. The preferred scheduling formulation can depend on the type of objective function used, as observed in \cite{Castro2016}, as well as on the types of operational constraints to be considered. 
Compared to continuous-time formulations, discrete-time formulations frequently yield simpler representations, with a trade-off of more integer variables. Even so, problems with certain types of operational constraints, such as variable flow rates and constraints on the number of simultaneous operations (such as barge unloads), can be significantly easier to model and solve in discrete-time (as seen in e.g. \cite{Castro2016}). A discrete-time formulation is also used in our work.

A number of solution methodologies have emerged to deal with the handling of the nonlinear nature of these scheduling problems, particularly over long scheduling horizons. Such methods include single-step \cite{Wenkai2002,Mouret2010,Oddsdottir2013,Castro2014a,Castro2015b,Castro2016} and iterated MILP-NLP \cite{Kelly2003,Karuppiah2008,Cerda2015,deAssis2017,Xu2017} approaches, which have emerged as some of the most efficient methods to ensure small optimality gaps for relatively short scheduling horizons. Other methods include MILP-(smaller MINLP) iterations \cite{Oddsdottir2013,Lotero2016} and iterative MILP-refinement methods \cite{Reddy2004a,Reddy2004a,Mendez2006,Li2007,Kolodziej2013b}.

For longer scheduling horizons, rolling planning methods are often used to limit the number of integer variables. Such models typically yield good feasible solutions far faster than the full model, with far better scaling of computation times with the scheduling horizon. 
In general, a rolling horizon model operates on a discrete time grid by representing in detail only the `present' segment $\{t^h, \dots, t^h+H^{\roll}-1\}$ of the scheduling horizon $\{1,\dots,H\}$ at each iteration, then tanking a step forwards in time of size $t^{\step} \le t^h$ for the next iteration, freezing the decisions made in the interval $\{t^h,\dots,t^h+t^{\step}-1\}$ and proceeding until the full horizon is reached.

In some rolling planning models, such as those in \cite{Silvente2015,Cuiwen2012,Oddsdottir2013,Stolletz2014,Lu2015} and heuristic H2 in \cite{Torkaman2017}, all aspects of the `past' decisions in $\{1,\dots,t^h-1\}$ are frozen; in others, such as \cite{Cerda2015} and heuristics H1, H3, and H4 in \cite{Torkaman2017}, only certain variables are frozen (most commonly the binary/integer decisions). The treatment of the `future' portion of the scheduling horizon $\{t+H^{\roll},\dots,H\}$ varies considerably between models: Some models exclude the `future' from the current model entirely, as in \cite{Silvente2015,Cuiwen2012,Stolletz2014,Marquant2015,Cerda2015}, whereas some relax complicating constraints \cite{LUO2009,Torkaman2017} and/or treat integer variables as continuous \cite{Lu2015,Torkaman2017}. Some works, such as \cite{Cuiwen2012,Oddsdottir2013,Champion2017,Silvente2018}, deal with uncertain events by updating uncertain data or events at each (or certain) rolling planning steps(s). For bi-level integrated planning and scheduling models, it is common to solve the scheduling subproblem only for the `present' segment, while including (a subset of) the future segments in the planning subproblem, as is done in \cite{Li2010,Zondervan2014,Silvente2015,Silvente2018}. Some works use an alternate framework based on supply arrivals to determine the time steps for each rolling iteration, as in \cite{Reddy2004a,Reddy2004b,Li2007,Cerda2015}; \cite{Li2007} in particular allows for some backtracking to correct spec quality discrepancies in the current plan.

Finally, many ideas for MILP approximations to deal with bilinear terms have emerged in recent years, especially for the pooling problem. Some methods are either binary-free, or add a number of binary variables that scales linearly with the desired level of precision. These include various linearization techniques \cite{Joly2003,Moro2004}, piecewise-linear approximation \cite{Pham2009,Gao2015}, outer-approximation methods \cite{Xu2017}, and McCormick or piecewise McCormick envelopes \cite{McCormick1976,Bergamini2005,Mouret2009b,Castro2015a,Castro2015b,deAssis2017}. \cite{Gounaris2009} in particular gives a numerical comparison for fifteen different piecewise McCormick-related schemes, split into three classes of schema. For other methods, often utilizing digit-based representations for a single variable, the number of binaries scales logarithmically with the desired precision. Such methods include multi-parametric disaggregation \cite{Teles2011,Kolodziej2013a,Castro2014a,Castro2014b,Castro2015b,Castro2015c,Castro2016} and other techniques \cite{Vielma2009,Misener2011,Gupte2013,Kolodziej2013b,Gupte2016}. Approximation via generalized disjunctive programming (GDP) is another, far more expressive, approach that has been used to tackle pooling-type problems. Castro and Grossmann et. al. have derived a MILP-based formulation for variants of multi-parametric disaggregation as a disjunctive program \cite{Grossmann2011,Kolodziej2013a,Castro2014a,Castro2015b,Castro2015c}, and have also applied it separately \cite{Ruiz2010,Castro2016,Lotero2016}. In this work, we choose to directly discretize the inventory specs of each tank using a formulation based on normalized multi-parametric disaggregation (NMDT), as set forth in \cite{Castro2015c}, yielding a relatively simple and highly efficient model in conjunction with a rolling planning approach. 

We were unable to find any other work in the literature which combines a version of the multi-parametric disaggregation technique with a version of rolling planning; this combination is a key novelty of this work. The primary results of this work include the speed of the method achieved while yielding near-optimal solutions without compromising the qualities of the specs in the demand feeds, combined with the incorporation of constraints on the ratios of specs in the demand feeds.

\section{Mathematical Model}
\label{sec:Mathematical-Model}

We describe a nonconvex MIQCP model in this section that introduces variables to handle the constraints.  Data in the problem is written with an underscore, continuous variables are written with lower-case letters, binary variables are written with Greek letters, and sets are written with capital letters.
\subsection{Terminology}
\begin{tabular}{ll}
   Run  & Production run, consisting of a number of consecutive days for which the production feed\\
        & \hspace{.2cm} must be constant in terms of volume.\\
   Feed  & Flow from a storage tank to the downstream production processes. \\
   Mixing & The mixing of material from a number of different sources to a single destination.\\
   Splitting & The splitting of material from a single source to a number of different destinations.\\
   Spec & Concentration of a certain type of chemical property.
\end{tabular}
\subsection{Sets and Parameters}

\underline{Sets}\\
\begin{tabular}{ll}
	$S$ &: Set of supply nodes (barges with raw material), indexed by $s$.\\
	$S_k$ &: Set of supply barges allowed to unload to tank $k \in  K$.\\
    $S_t$ &: Set of supply barges available during time period $t \in  T$.\\
    $K$ &: Set of tanks, indexed by $k$.\\
    $K_s$ &: Set of tanks to which supply barge $s \in S$ is allowed to unload. \\
    $Q$ &: Set of chemical properties, indexed by $q$.\\
    $Q_{\rat}$ &: Pairs of specification for which ratio bounds are to be enforced.\\
    $R$ &: Set of non-overlapping demand runs (consecutive days for which the production feed is constant). \\
    $T$ &: Set of time periods (days), such that $T$ $\defeq\{1, 2, \dots, H\}$.\\
    $T_r$ &: Time periods corresponding to run $r$ where runs do not overlap, such that $T_r =  \{t^{i}_r, t^{i}_r+1, \dots, t^f_r\} \subseteq T$. \\
    $T_s$ &: Time periods during which supply $s \in S$ can be unloaded, such that  $T_s =  \{t^{\low}_s,t^{\low}_s+1,\dots,t^{\upp}_s\}$. \\
    $T_d$ &: Time periods where there is some demand, such that  $T_d=  \bigsqcup_{r \in R} T_r $.\\
\end{tabular}

\underline{Parameters}\\
\begin{longtable}{ll}
    $\misInbPenalty_s$ &: Penalty cost per volume of not unloading supply from supply node $s$.\\
    $\misPenalty_t$ &: Penalty cost per volume of not meeting production feed volume for time period $t$.\\
    $\demand_t$ &: Demand to meet in time step $t \in T$. Must be constant throughout each run.\\
    & \hspace{.2cm} Thus, $\demand_t = \demand_{t'}$ for all $t,t' \in T_r$ for $r \in R$. Also, $\demand_t = 0$ for $t \notin T_d$.\\
    $\spec_{s,q}$ &: Value of chemical characteristic  $q \in Q$ in supply source $s \in S$.\\
    $\spec_{k,q,0}$ &: Initial value of chemical characteristic $q \in Q$ in tank $k \in K$.\\
    $\spec^{\low}_{q,t}, \spec^{\upp}_{q,t}$ &: Bounds on the value of chemical characteristic $q \in Q$ for the demand to be filled \\
    & \hspace{.2cm} in time step $t \in T$.\\
	$H$ &: Scheduling time horizon (implicit in the definition of $T$).\\
    $\specRat^{\low}_{q_1,q_2,t}, \specRat^{\upp}_{q_1,q_2,t}$ &: Bounds on the ratio of chemical characteristics $\SPEC_{q_1}$ and $\SPEC_{q_2}$,$(q_1,q_2) \in Q_{\rat}$ for the demand \\
    & \hspace{.2cm} to be filled in time step $t \in T$.\\
    $\minTankFeedPct_k$ &: Minimum percentage of feed each day that can originate from \\
    & \hspace{.2cm} tank $k$ (if used).\\
    $\maxUnloadsPerDay$ &: Maximum number of supply barges that can be unloaded each day.\\
    $\minDailyUnloadPct_s$ &: Minimum percentage of inventory from supply barge $s$.\\
    & \hspace{.2cm} that can be unloaded each day that unloading occurs for $s$.\\
    $\maxUnloadTimeGap$ &: Maximum permissible gap between the first and last unloads \\
    & \hspace{.2cm} for each supply barge.\\
    $\invInb_s$ &: Volume of supply available for unloading from supply node $s \in S$. \\
    $\invInit_{k,0}$ &: Initial inventory volume of tank $k \in K$.\\
	$[\underbar{v}^{\min}_k,\underbar{v}^{\max}_k]$ &: Bounds on permissible inventory for each tank $k \in K.$
\end{longtable}

\underline{Variables} (All variables are non-negative)\\
\begin{longtable}{lll}
	Assignment: & $\gamma_{s,t}$ &: $ = \begin{cases}
		1: & \text{supply barge } s \text{ is unloaded in time step } t\\
        0: & \text{otherwise}
    \end{cases}$ : $s \in S, t \in T$\\
    ~ & $\sigma_{k,t}$ &: $ = \begin{cases}
		1: & \text{tank } k \text{ provides material for production feed in time step }t\\
        0: & \text{otherwise}
    \end{cases}$ : $k \in K, t \in T$ \\
    Transfer: & $\INB_{s,k,t}$ &: Volume of raw material to transfer from supply $s \in S$ to\\
    & & \hspace{0.2cm} tank $k \in K$ in time step $t \in T$\\
    & $\FEED_{k,t}$ &: Volume of material to feed to demand from node $k \in K$ \\
    & & \hspace{0.2cm} in time step $t \in T$\\
    Resource: & $\INV_{k,t}$ &: Volume of inventory in tank $k \in K$ at the end of time step $t \in T$\\
    & $\SPL_{k,t}$ &: Volume of inventory in tank $k \in K$ after material is supplied from supply barges,\\
    & & \hspace{0.2cm} but before feed operations, in time step $t \in T$\\
    Demand: & $\vd_t$ &: Total volume fed to production in time step $t \in T$\\
    Tank specs: & $\SPEC_{k,q,t}$ &: Value of chemical characteristic $q \in Q$ in tank $k \in K$ at the end of time step $t \in T$\\
    Final specs: & $\SPECd_{q,t}$ &: Value of chemical characteristic $q \in Q$ at production at the end of time step $t \in T$\\
    Unload time: & $\MINUNLTIME_{s}$ &: Earliest unload date for supply barge $s$\\
    ~ & $\MAXUNLTIME_{s}$ &: Latest unload date for supply barge $s$\\
    Slack: & $\MISINB_s$ &:  Volume of material that is not unloaded from supply barge $s \in S$\\
    ~ 	  & $\MIS_t$ &: Volume of production demand missed in time step $t \in T$\\
\end{longtable}

\underline{Objective:} Our objective is to minimize the amount of material that is not unloaded from barges ($\MISINB_s$) and the demand that is unmet ($\MIS_t$).  To accomplish this, we maximize the sum of the value of the raw material unloaded from the barges and the value of the production feed demand that is met.  The amount of material unloaded for a barge $s$ is exactly $\invInb_s - \MISINB_s$, and the amount of demand met at time $t$ is $\demand_t - \MIS_t$.  Hence, we have   
	\begin{equation}
    	\begin{array}{ll}
        	\max & \dsum_{s \in S} \misInbPenalty_s \cdot (\invInb_s - \MISINB_s) + \sum_{t \in T} \misPenalty_t \cdot (\demand_t - \MIS_t)
        \end{array}
    \end{equation}

\subsection{Constraints}
The following constraints limit the solution space of the assignment, blending, and mixing problem.  In the following discussion, note that variables $\INB,\FEED$ correspond to flows, while the variables $v$ correspond to stationary volumes at certain times including volumes on barges $v_s$, volumes in tanks $v_{k,t}$, and volumes required for production feed $v_t$.  
\\

\textit{\underline{Initial Conditions}}: The volume of material in the tanks and the chemical characteristics of the material are initialized.  Specifically,  \eqref{eq_invInit-tnk} enforces that the initial volumes in each storage tank are correctly represented, and  \eqref{eq_specInv} enforces that the initial chemical characteristics in each storage tank are specified.  
\begin{subequations}
\begin{align}
    \INV_{k,0} &= \invInit_{k,0} & \,& k \in K \label{eq_invInit-tnk}\\
    \SPEC_{k,q,0} &= \spec_{k,q,0} & \,& k \in K, q \in Q \label{eq_specInv}
\end{align}
\end{subequations}

\textit{\underline{Flow}}: 
For these flow constraints, we assume that  inbound supply material is unloaded and blended in the tanks before any material is drawn from the demand tanks to meet production feed on any given day. 
The model assumes that in the first half of a time period, barges unload to tanks \eqref{eq_invinb}, adding to the volume previously in the tank and creating an intermediate volume $\SPL_{k,t}$.  In the second half of the time period, material is unloaded from the tanks \eqref{eq_outflow} to support production \eqref{eq_invProduction}, while within required volume bounds at production \eqref{eq_invlb} and \eqref{eq_invub}. 
\begin{subequations}
\begin{align}
	\dsum_{s \in S_k} \INB_{s,k,t} + \INV_{k,t-1} &= \SPL_{k,t}%
	& \,& k \in K, t \in T \label{eq_invinb}\\
		\SPL_{k,t}%
		-  \FEED_{k,t} &= \INV_{k,t} & \,& k \in K, t \in T \label{eq_outflow}\\
	\dsum_{k \in K} \FEED_{k,t} &= \vd_{t} & \,& t \in T \label{eq_invProduction}\\
	\INV_{k,t} &\ge \underbar{v}^{\min}_k & \,& k \in K, t \in T \label{eq_invlb}\\
	\SPL_{k,t} &\le \underbar{v}^{\max}_k & \,& k \in K, t \in T \label{eq_invub}
\end{align}
\end{subequations}

\textit{\underline{Chemical Characteristics}}: \Nonlinear The following constraints capture the blending and mixing of chemical characteristics.  Specifically, \eqref{eq_infspec} ensures the characteristics of the material in a tank at the end of the previous time period are combined with the material from supply barges to create a mix. Expression \eqref{eq_feedspecinv} determines the characteristics of the material provided from tanks to meet production feed.   
\begin{subequations}
\begin{align}
	\SPEC_{k,q,\thalf}  \cdot \SPL_{k,\thalf}
	&= \displaystyle \sum_{s \in S_k} \spec_{s,q} \cdot \INB_{s,k,t}   + \SPEC_{k,q,t-1} \cdot \INV_{k,t-1} & \,& k \in K, q \in Q, t \in T\label{eq_infspec}\\
	 \SPECd_{q,t} \cdot \vd_{t} &= \dsum_{k \in K} \SPEC_{k,q,t} \cdot \FEED_{k,t}  & \,& q \in Q, t \in T \label{eq_feedspecinv}
\end{align}
\end{subequations}
 Thus, while supply unloads and production feeds can occur in the same time period (day), they are not considered to occur simultaneously.\\

\textit{\underline{Demand}}: The following constraint set captures the production feed volume that is not met each time period,  $\MIS_t$.
\begin{align}
	\displaystyle \vd_{t} &= \demand_t - \MIS_t &  t \in T \label{eq_demand}
\end{align}
\textit{\underline{Feed Characteristics}}: Both the chemical characteristics \eqref{eq_feedspec} and ratios of characteristics  \eqref{eq_feedspecrat} need to be within the required bounds. Note that in practice for \eqref{eq_feedspecrat} the denominator would be multiplied through, as numerical issues can arise when the denominator is small. 
\begin{subequations}
\begin{align}
	\spec^{\low}_{q,t} &\le  \SPECd_{q,t} \le \spec^{\upp}_{q,t} & \,& t \in T,~q \in Q \label{eq_feedspec}\\
	 \specRat^{\low}_{q_1,q_2,t} &\leq \frac{\SPECd_{q_1,t}}{\SPECd_{q_2,t}} \leq \specRat^{\upp}_{q_1,q_2,t} & \, & t \in T,~(q_1,q_2) \in Q_{\rat}  \label{eq_feedspecrat}
\end{align}
\end{subequations}

\textit{\underline{Total Inflow}}: Ideally, the volume of supply on each barge is fully unloaded.  Expression \eqref{eq_tot_inflow} captures the volume of supply that is not unloaded ($\MISINB_s$).  In addition, \eqref{eq_correcttime_inflow} ensures no material is unloaded from a supply barge when the barge is not available.
 \begin{subequations}
\begin{align}
	\displaystyle \sum_{k \in K_s} \sum_{t \in T_s} \INB_{s,k,t} &= \invInb_s - \MISINB_s  & &\quad  \, && s \in S \label{eq_tot_inflow}\\
	\INB_{s,k,t} &= 0  & &\quad \, &&  s \in S, k \in K_s, t \in T \backslash T_s \label{eq_correcttime_inflow}
\end{align}
 \end{subequations}

\textit{\underline{Continuous Feed During Runs}}: The volume of supply provided by each tank to meet production feed needs to remain constant during a production run. %
\begin{align}
	\displaystyle \FEED_{k,t} &= \FEED_{k,t-1} &  r \in R, k \in  K,~t \in T_r \backslash \{t^{i}_r\} \label{eq_contfeed}
\end{align}

\textit{\underline{Tank Feed Share Constraints}}:  If production feed is supplied by tank $k$ during time period $t$ (such that $\sigma_{k,t} = 1$), then the volume of production feed ($\FEED_{k,t}$) must be at least $\minTankFeedPct_k$ of the demand $\demand_t$ at time $t$, as enforced by \eqref{eq_feedshare_lb}.  If $\sigma_{k,t} = 0$, then \eqref{eq_feedshare_ub} ensures no feed is provided by tank $k$ for time $t$.
\begin{subequations}
\begin{align}
	&\displaystyle \FEED_{k,t} \ge (\minTankFeedPct_k \cdot \demand_t) \cdot \sigma_{k,t} & \,&  k\in K, t \in T \label{eq_feedshare_lb}\\
    &\displaystyle \FEED_{k,t} \le \demand_t \cdot \sigma_{k,t}  & \,& k \in K, t \in T \label{eq_feedshare_ub}
\end{align}
\end{subequations}
\textit{\underline{Supply Limitations}}: Although it is preferred that barges are completely unloaded in one day, due to tank storage capacity, this may need to be done over several  days.  Hence, we constrain the number of unloads to be bounded by $\bargeNumUnloads$, which are $2$ in all of our instances \eqref{eq_barge_num_unloads}. The number of times any barge is unloaded within a time period is also limited \eqref{eq_day_num_unloads}. Expression \eqref{eq_tighten_correctperiod_unloads} serves as a tightening constraint to ensure that barges are not unloaded when they are not available. If a supply barge is not unloaded during a time period (such that $\gamma_{s,t}$ is zero), then no  volume is unloaded from the barge \eqref{eq_unl_ub}.  
\begin{subequations}
\begin{align}
	\displaystyle \sum_{t \in T_s} \gamma_{s,t} &\le \bargeNumUnloads & \,& s \in S \label{eq_barge_num_unloads}\\
    \gamma_{s,t} &= 0 & \,& s \in S, t \in T\backslash T_s \label{eq_tighten_correctperiod_unloads}\\
    \displaystyle \sum_{s \in S} \gamma_{s,t} &\le \maxUnloadsPerDay & \,& t \in T \label{eq_day_num_unloads}\\
    \INB_{s,k,t} &\le   \invInb_{s}\cdot \gamma_{s,t} & \,& s \in S, t \in  T,~k \in K_s \label{eq_unl_ub}
\end{align}
\end{subequations}

\textit{\underline{Unload Inventory Percentage Constraints}}:   \eqref{eq_minunl_pct} enforces that the percent of inventory unloaded per supply barge remains above required minimum $\minDailyUnloadPct_s$ each day, if unloading is to occur that day. 
\begin{align}
	\displaystyle \sum_{k \in S_k} \INB_{s,k,t} &\ge  ( \invInb_s \cdot \minDailyUnloadPct_s)  \cdot \gamma_{s,t} & s \in S, t \in  T \label{eq_minunl_pct}
\end{align}

\textit{\underline{Unload Time Gap Constraints}}: In an effort to minimize the cost associated with barges remaining in the area for an extended time, these constraints limit the time duration for a supply barge to remain in the unloading area.  To accomplish this, constraints are included to limit the interval $[\MINUNLTIME_s,\MAXUNLTIME_s]$.  Expression \eqref{eq_minunltime} ensures that  $\MINUNLTIME_s$ is no later than the first period that any inventory is unloaded for barge $s$. Expression \eqref{eq_maxunltime} enforces that $\MAXUNLTIME_s$ is no earlier than the last period that any inventory is unloaded for barge $s$. Finally, \eqref{eq_unlwindow} enforces that the difference between the first and last unloading times for each barge is no larger than the allotted time gap.  Although $\MINUNLTIME_s$ may assume a value earlier than the actual earliest unload time and $\MAXUNLTIME_s$ may assume a value after the last unload time, if the interval is \textit{shorter} than the required length, then the actual interval will also be shorter.  Thus, this set of constraints is sufficient. 
\begin{subequations}
\begin{align}
	&\MINUNLTIME_s \le t \cdot \gamma_{s,t}  + H(1 - \gamma_{s,t})& \,& s \in S, t \in  T \label{eq_minunltime}\\
	&\MAXUNLTIME_s \ge t \cdot \gamma_{s,t} -H(1-\gamma_{s,t})& \,& s \in S, t \in  T \label{eq_maxunltime}\\
    &\MAXUNLTIME_s - \MINUNLTIME_s \le \maxUnloadTimeGap \label{eq_unlwindow} & \,& s \in S
\end{align}
\end{subequations}
All of the constraints described above are linear except for the chemical characteristic constraints described by \eqref{eq_infspec},\eqref{eq_feedspecinv}, which contain bilinear terms. MILP approximations for these constraints are formulated in Section~\ref{ssec_MILP-approx}. 

\subsection{Reformulation without spec variables $\SPECd_{q,t}$ \label{ssec_prod}}
For the implementation of this model, we adjust the model to eliminate terms consisting of only a single $\SPECd$ variable from the model, restricting the model to product terms $\SPEC \cdot v$ or $\SPECd \cdot v$, referred to as spec-volume terms. In this way, after discretization, we no longer have need to explicitly keep track of these specs at all, relying solely on these spec-volume product terms. In addition, we eliminate the $\SPECd_{q,t}$ and $\vd_{t}$ variables. 

The initial constraint \eqref{eq_specInv} is  replaced with
    \begin{align}
         \SPEC_{k,q,0} \cdot \INV_{k,0} &= \spec_{k,q,0} \cdot \invInit_{k,0} & k \in K,~ q \in Q \label{eq_specinv_2}\tag{\ref{eq_specInv}-prod}
         \end{align}
By multiplying \eqref{eq_outflow} by $\SPEC_{k,q,t}$ we obtain
\begin{align}
     \SPEC_{k,q,t} \cdot \SPL_{k,t} - \SPEC_{k,q,t} \cdot \FEED_{k,t} &= \SPEC_{k,q,t} \cdot \INV_{k,t} & k \in K,~ t \in T
\label{eq_specOutflow} \tag{\ref{eq_outflow}-prod}
\end{align}
Note that the $\SPL_{k,t}$ variables could also be eliminated via the constraint \eqref{eq_outflow}. However, as this elimination choice can impact the quality of the MILP approximation, we defer this discussion to a later section.

We reformulate the constraints \eqref{eq_feedspec},\eqref{eq_feedspecrat} by multiplying through by volume (and any denominators) to obtain the following:
\begin{subequations}
\begin{align}
	\spec^{\low}_{q,t} \cdot \vd_{t} &\le  
	\SPECd_{q,t} \cdot \vd_{t} \le 
	\spec^{\upp}_{q,t} \cdot \vd_{t} & 
	\,& t \in T,~q \in Q
	\label{eq_feedspec_2}\tag{\ref{eq_feedspec}-prod}\\
	 \specRat^{\low}_{q_1,q_2,t} \cdot\SPECd_{q_2,t} \cdot \vd_{t}   &\leq \SPECd_{q_1,t}\cdot \vd_{t} \leq 
	\specRat^{\upp}_{q_1,q_2,t} \cdot\SPECd_{q_2,t} \cdot \vd_{t}
	 & \, & t \in T,~(q_1,q_2) \in Q_{\rat}  \label{eq_feedspecrat_2}\tag{\ref{eq_feedspecrat}-prod}
\end{align}
\end{subequations}
Finally, we substitute the definitions of $v_{t}$ and $\SPEC_{k,q,t} \cdot v_{t}$ in \eqref{eq_invProduction} and \eqref{eq_feedspecinv} into constraints \eqref{eq_demand}, \eqref{eq_feedspec_2},\eqref{eq_feedspecrat_2} to obtain:
\begin{subequations}
\begin{align}
      \dsum_{k \in K} \FEED_{k,t} &= \demand_t - \MIS_t & \, & t \in T \label{eq_demand_2}
	  \tag{\ref{eq_demand}-sub}\\
	\spec^{\low}_{q,t} \cdot \dsum_{k \in K} \FEED_{k,t} &\le  
	\dsum_{k \in K} \SPEC_{k,q,t} \cdot \FEED_{k,t} \le 
	\spec^{\upp}_{q,t} \cdot \dsum_{k \in K} \FEED_{k,t} & 
	\,& t \in T,~q \in Q \label{eq_feedspec_3}\\
	\specRat^{\low}_{q_1,q_2,t}\cdot \dsum_{k \in K} \SPEC_{k,q_2,t} \cdot \FEED_{k,t}   &\leq 
	 \dsum_{k \in K} \SPEC_{k,q_1,t} \cdot \FEED_{k,t} \leq 
	 \specRat^{\upp}_{q_1,q_2,t}\cdot  \dsum_{k \in K} \SPEC_{k,q_2,t} \cdot \FEED_{k,t}   
	 & \, & t \in T,~(q_1,q_2) \in Q_{\rat}  \label{eq_feedspecrat_3}
\end{align}
\end{subequations}
To summarize, the MINLP-Mixing reformulation is the initial model, but with \eqref{eq_specInv}, \eqref{eq_outflow}, \eqref{eq_invProduction}, \eqref{eq_feedspecinv}, \eqref{eq_demand}, \eqref{eq_feedspec}, and \eqref{eq_feedspecrat} replaced by \eqref{eq_specinv_2}, \eqref{eq_specOutflow}, \eqref{eq_demand_2}, \eqref{eq_feedspec_3},\eqref{eq_feedspecrat_3}.

\subsection{Model MINLP-SPLIT: Volume-based reformulation \label{ssec_split}}
The model presented above can be reformulated to keep track of volumes of chemical characteristics instead of percentages $\SPEC_{k,q,t}$, removing $\SPEC_{k,q,t}$ from the model. This reformulation results in a linear representation for the mixing constraints in \eqref{eq_feedspec_3},\eqref{eq_feedspecrat_3},\eqref{eq_infspec} and modified flow balance constraint in \eqref{eq_specOutflow}. However, there is now a nonlinear representation for the splitting process, in which some volume is split into multiple parts (e.g. future inventory and demand feed). In the original model, the definition and usage of $\SPEC_{k,q,t}$ ensures that the outflow characteristics are correct. However, after reformulating without $\SPEC_{k,q,t}$, this is not longer the case, so we must ensure that the outflow characteristics are correct.

To enact the reformulation, we make the following substitutions:
\begin{subequations}
\begin{align}
    \SPEC_{k,q,t} \cdot \INV_{k,t} & = \SVOLINV_{k,q,t} & \, & k \in K, q \in Q, t \in T\\
    \SPEC_{k,q,t} \cdot \SPL_{k,t} & = \SVOLSPL_{k,q,t} & \, & k \in K, q \in Q, t \in T\\
    \SPEC_{k,q,t} \cdot \FEED_{k,t} & = \SVOLFEED_{k,q,t} & \, & k \in K, q \in Q, t \in T
\end{align}
\end{subequations}
Thus, \eqref{eq_specOutflow}, \eqref{eq_infspec}, \eqref{eq_feedspec_3} and \eqref{eq_feedspecrat_3} are written as
\begin{subequations}
\begin{align}
     \SVOLINV_{k,q,t} &= \SVOLSPL_{k,q,t} - \SVOLFEED_{k,q,t} & \,& k \in K, t \in T
     \label{eq_specOutflow_2}\\
	\SVOLSPL_{k,q,\thalf} &= \displaystyle \sum_{s \in S_k} \spec_{s,q} \cdot \INB_{s,k,t} + \SVOLINV_{k,q,t-1} 
	& \,& k \in K, q \in Q, t \in T\label{eq_infspec_2}\tag{\ref{eq_infspec}-vol}\\
	\spec^{\low}_{q,t} \cdot \dsum_{k \in K} \FEED_{k,t} &\le  
	\dsum_{k \in K} \SVOLFEED_{k,q,t} \le 
	\spec^{\upp}_{q,t} \cdot \dsum_{k \in K} \FEED_{k,t} & 
	\,& t \in T,~q \in Q \label{eq_feedspec_4}\\
	\specRat^{\low}_{q_1,q_2,t}\cdot \dsum_{k \in K} \SVOLFEED_{k,q_2,t} &\leq 
	 \dsum_{k \in K} \SVOLFEED_{k,q_1,t} \leq 
	 \specRat^{\upp}_{q_1,q_2,t}\cdot  \dsum_{k \in K} \SVOLFEED_{k,q_2,t} 
	 & \, & t \in T,~(q_1,q_2) \in Q_{\rat}  \label{eq_feedspecrat_4}
\end{align}
\end{subequations}
In addition, the initial condition \eqref{eq_specinv_2} is replaced with
    \begin{align}
         \SVOLINV_{k,q,0} &= \spec_{k,q,0} \cdot \invInit_{k,0} & k \in K,~ q \in Q \label{eq_specinv_3}
     \end{align}
Now, to enforce the requirement that the chemical characteristics sent to production feed match match the chemical characteristics in the tank; that is:
\begin{subequations}
\begin{align}
    \SPEC_{k,q,t} = \dfrac{\SVOLSPL_{k,q,t}}{\SPL_{k,t}} =  \dfrac{\SVOLFEED_{k,q,t}}{\FEED_{k,t}}
\end{align}
\end{subequations}
we add the following nonlinear constraint
\begin{subequations}
\begin{align}
    \SVOLSPL_{k,q,t} \cdot \FEED_{k,t} &= \SVOLFEED_{k,q,t} \cdot \SPL_{k,t}  & \, & k \in K,~ q \in Q,~ t \in T
    \label{eq_split}
\end{align}
\end{subequations}
To summarize, the MINLP-Splitting model is the initial model, but with \eqref{eq_specInv},\eqref{eq_outflow},\eqref{eq_invProduction},\eqref{eq_infspec},\eqref{eq_feedspecinv},\eqref{eq_demand},\eqref{eq_feedspec},\eqref{eq_feedspecrat} replaced by \eqref{eq_specinv_3}, \eqref{eq_specOutflow_2}, \eqref{eq_demand_2}, \eqref{eq_feedspec_4}, \eqref{eq_feedspecrat_4}, and \eqref{eq_split}.

\section{MILP Approximation Methods}
We consider approaches of changing the nonlinear portions of the model in Section~\ref{ssec_prod} to an MILP to more efficiently solve the problem and more easily interact with a rolling planning approach.  The core change is that we will replace all products $\INB\cdot f$ and $\FEED\cdot f$ with approximations. The new models, given sufficiently accurate approximations, will produce feasible flows $\INB$ and $\FEED$ that completely determine the actions of the system.  

For a spec variable $f \in [l,u]$ we describe it by a discete part $\dot{f} \in \{l, l + \epsilon, l+2\epsilon, \dots, l+ \zeta \epsilon\}$ and a continuous part $\Delta f \in [0,\epsilon]$, where we choose $\epsilon$ such that $u - \epsilon < 1 + \zeta \epsilon \leq u$.  The discrete part $\dot{f}$ will be modeled with binary variables in a way known as Normalized Multi-Parametric Disaggregation Techniques (NMDT). For the products with $\Delta f$, we will consider two options - restricting the value to a midpoint or using the McCormick relaxation.  That is, the two main ideas considered are the models:  
 \begin{align}
x \cdot \tilde f &= \underbrace{x  \cdot \dot{f}}_{\text{NMDT}} + \underbrace{x \cdot \Delta f}_{\Delta f \leftarrow \frac{\hat \epsilon}{2}} &
\label{approx-model}\tag{Center}\\
x \cdot \tilde f &= \underbrace{x  \cdot \dot{f}}_{\text{NMDT}} + \underbrace{x \cdot \Delta f}_{\text{McCormick}} &
\label{NMDT-model}
\tag{McCormick}
\end{align}
When using \eqref{approx-model}, we relax \eqref{eq_infspec} to maintain flexibility on inflow volumes. We do this by replacing the left-hand side with its upper and lower bounds with respect to $\Delta f$.
These choices, along with various relaxations of redundant constraints are described in detail in this section.

We will first describe common linearizations that we will consider, then describe our two competing models, and then have a discussion of rolling planning approaches.

  \subsection{Linearization}
  We define here two types of linearizations that we will consider.  First,   
the McCormick envelope is a 3-dimensional polytope $\mathcal{M}(x,\beta)$ that is the smallest convex set containing the equation $\xsf^\betasf =  x \cdot \beta$ with upper and lower bounds on $\beta$ and $x$. We will assume $0 \leq \beta \leq 1$ and $\underbar{x}^{\min} \leq x \leq \underbar{x}^{\max}$.  The McCormick envelope is
\begin{equation}
\mathcal{M}(x, \beta) = \left\{ (x, \beta, \xsf^\betasf) \in [\underbar{x}^{\min},\underbar{x}^{\max}]\times [0,1] \times \R \ : \  
    \begin{aligned}
       \underbar{x}^{\min} \cdot \beta \le  & \, \xsf^\betasf \le \underbar{x}^{\max}\cdot \beta \\
  x - \underbar{x}^{\max}\cdot (1-\beta)  \le &\, \xsf^\betasf \le  x - \underbar{x}^{\min}\cdot (1-\beta) 
    \end{aligned}
    \right\}.
    \label{eq:McCormick}
\end{equation}

More generally, suppose we have $\beta_0, \dots, \beta_m \in \{0,1\}$ such that $\sum_{j=0}^m \beta_j = 1$ hence $\sum_{j=0}^m x\cdot  \beta_j = x$.  Defining $\xsf^{\betasf_j} = x \cdot \beta_j$, we can then model the convex hull of all such solutions as 
\begin{equation}
\mathcal{M}^m(x, \beta) = \left\{ (x, \beta, \xsf^\betasf) \in [\underbar{x}^{\min},\underbar{x}^{\max}]\times [0,1]^{m+1} \times \R^{m+1} \ : \  
    \begin{aligned}
      \sum_{j=0}^m \beta_j=& \, 1, \ \ \  \,\,  
       \sum_{j=0}^m \xsf^{\betasf_j}= \, x\\
       \underbar{x}^{\min} \cdot \beta_j \le  & \, \xsf^{\betasf_j} \le \underbar{x}^{\max}\cdot \beta_j \\
  x - \underbar{x}^{\max}\cdot (1-\beta_j)  \le &\, \xsf^{\betasf_i} \le  x - \underbar{x}^{\min}\cdot (1-\beta_i) 
    \end{aligned}
    \right\}.
    \label{eq:McCormick-general}
\end{equation}
In the special case where $m=1$, the set $\mathcal M^1(x,\beta)$ is a bijective lifting of $\mathcal M(x,\beta)$. 

\subsection{Discretization}
\label{ssec_MILP-approx}
We discuss two methods of linearizing or relaxing  bilinear products: Normalized Multiparametric Descretization Technique (NMDT)~\cite{Castro2015c} and McCormick envelopes.  
The discrete variable $\dot{f}$, contained in the interval $[l,u]$, is converted into binary variables as 
\begin{equation}
\dot{f} = \lambda_0 + \varepsilon \sum_{i=1}^n \sum_{j=0}^{m} (j\, \lambda_i)\cdot \alpha_{ij}, \ \hspace{1cm} \, \sum_{j=0}^m \alpha_{ij} =1 \text{ for all } i=1, \dots, n, \label{eq_disc_idea}
\end{equation}
where $\alpha_{ij} \in \{0,1\}$ for all $i,j$ and $\lambda_i$ are appropriately chosen scalars. Given a desired level of precision $0 < \hat{\varepsilon} \le u-l$, the choices of $\lambda_i$ (and resulting choices for $\varepsilon$, $m$, and $n$) will affect the number of binary variables $\alpha_{ij}$ that are needed to reach the desired precision.

\begin{enumerate}
    \item {Multiparametric Disaggregation Technique} with base $b$ provided $l \geq 0$:
\begin{equation}
 \lambda_0 = 0, \lambda_i = b^{i-1}, m=b-1; \rightarrow  \ 
    \varepsilon = b^{\floor{\log_{b}(\hat{\varepsilon})}}, 
    n = \ceil{\log_{b}(\tfrac{u}{\varepsilon})}. \tag{MDT} \label{eq:MDT}
\end{equation}
     \item  {Normalized Multiparametric Disaggregation Technique, with base $b$}:  
     \begin{equation}
              \lambda_0 = l, \lambda_i = b^{i-1}, m=b-1;
     n = \ceil{\log_b \lrp{\tfrac{u-l
     }{\hat{\varepsilon}}}}, \varepsilon = (u-l) b^{-n}. \tag{NMDT} \label{eq:NMDT}
     \end{equation}
     \item Monolithic Disaggregation:  
     \begin{equation}
         \lambda_0 = l, n=1, \lambda_1 = 1;   
     m = \ceil{\tfrac{u-l}{\hat{\varepsilon}}}, \varepsilon = (u-l)/m. \tag{Mono} \label{eq:mono}
     \end{equation}
\end{enumerate}
\eqref{eq:MDT} discretizes $\dot{f}$  independent of the lower bound $l$, while \eqref{eq:NMDT} and \eqref{eq:mono} use the lower bound and can be interpreted as discretizing $\dot{f}$ after normalizing the bounds $[l,u]$ to $[0,1]$.  Incorporating the lower bound is crucial for reducing the number of binary variables needed (and hence the complexity of the model) to obtain a certain precision $\varepsilon$. 
Furthermore, the exponential approach of \eqref{eq:NMDT} is vastly preferred over the unary approach of \eqref{eq:mono}, allowing only  ($\mathcal{O}(\log(1/\varepsilon))$ as opposed to $\mathcal{O}(1/\varepsilon)$).

In the originating work for \eqref{eq:NMDT} \cite{Castro2015c}, the same number of digits $n$ (and hence, precision $\varepsilon$) is applied to all discretized variables (to maintain the same precision in the \textit{normalized} space), while in the version presented here, $n$ is computed based on the desired precision $\hat{\varepsilon}$ for each individual discretized variable (to maintain similar precision in the \textit{original} space). 
\begin{equation}
\mathcal{D}(x,f, \alpha) = \left\{
 \begin{aligned}(x,f,\dot{f},\Delta f,\\ 
 \alpha, \xsf^\fsf, \xsf^\alphasf)\  
 \end{aligned}:\  
\begin{aligned}
   \, & f = \dot{f} + \Delta f\\
  \,&\dot{f} = \lambda_0 + \varepsilon \sum_{i=1}^n \sum_{j=1}^{m} (j \,\lambda_i)\cdot  \alpha_{ij}&      \\
  \,&\sum_{j=0}^m \alpha_{ij} = 1 &\, i=1, \dots, n\\
    \,&\xsf^\fsf = x \cdot f =\lambda_0 x + x \cdot \Delta f + \varepsilon \sum_{i=1}^n \sum_{j=0}^{m} (j \,\lambda_i)\cdot  \xsf^{\alphasf_{ij}}&       \\
    \,&\xsf^{\alphasf_{ij}} = x \cdot \alpha_{ij} & i=1,\dots, n,\, j = 0, \dots, m\\
   \, & \underbar{x}^{\min} \leq x \leq \underbar{x}^{\max}\\
  \, & \alpha_{ij} \in \{0,1\} & i=1,\dots, n,\, j = 0, \dots, m
 \end{aligned}\right\}.
\end{equation}
Most of the nonlinear products are now linearized and we drop the variables $\dot{f}$ and $f$.  In this way, $f$ is now only recorded implicitly (approximately) through product terms $\xsf^{\alphasf_{ij}}$, $\xsf^\fsf$, and $\xsf^{\Delta \fsf}$. This produces the set
\begin{equation}
\mathcal{\tilde D}(x,f,\alpha) = \left\{ (x, \alpha, \xsf^\fsf, \xsf^{\Delta \fsf}, \xsf^\alphasf) \ :\  
\begin{aligned}
  \,&  \xsf^\fsf = \lambda_0 \cdot x +  \xsf^{\Delta\fsf}+ \varepsilon \sum_{i=1}^n \sum_{j=0}^{m} (j \,\lambda_i)\cdot  \xsf^{\alphasf_{ij}}&      \\
  \, & (x,\alpha_i, \xsf^{\alpha_i}) \in \mathcal M^m(x,\alpha_i)\\
  \, & \alpha_{ij} \in \{0,1\} & i=1,\ldots, n,\, j = 0, \ldots, m
 \end{aligned}\right\},
 \label{eq:tilde-D}
\end{equation}
where $\lambda_0$, $n$, and $\varepsilon$ are computed from a specified $\hat \varepsilon$, $\underbar{x}^{\min}$ and $\underbar{x}^{\max}$.

 The remaining nonlinear product $x \cdot \Delta f$, approximated by $\xsf^{\Delta \fsf}$, will be dealt with in two different ways.

The binarization in \cite{Gupte2016} is similar to \eqref{eq:NMDT} with $b=2$ after projecting out the $\alpha_{i0}$ variables using the equation $\sum_{j=0}^1 \alpha_{ij} = 1$, i.e., $\alpha_{i0} + \alpha_{i1} = 1$.  

\subsubsection{Choosing base $b=2$ reduces variables.}
We suggest the best model uses $b = 2$ since it
uses asymptotically fewer binary variables than any other choice $b > 2$ (as $\hat{\varepsilon} \to 0^+)$.

\begin{proposition} Let $z_{\hat{\varepsilon}}(b) \defeq (b-1) \cdot n = (b-1) \ceil{\log_b\lrp{\dfrac{u-l}{\hat{\varepsilon}}}}$ be the number of binary variables used to represent $\dot{f}$, and let $b_1,b_2$ be two different bases. Then $z'(b_1,b_2) \defeq \ds \lim_{\hat{\varepsilon} \to 0^+} \dfrac{z_{\hat\varepsilon}(b_1)}{z_{\hat\varepsilon}(b_2)} = \dfrac{(b_1-1)\ln(b_2)}{(b_2-1) \ln(b_1)}$.
\end{proposition}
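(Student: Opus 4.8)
The plan is to reduce the statement to an elementary limit by stripping off the ceiling function with a sandwich argument. First I would abbreviate $N = N(\hat\varepsilon) \defeq \tfrac{u-l}{\hat\varepsilon}$. Since the precision parameter satisfies $0 < \hat\varepsilon \le u-l$, we have $u-l > 0$, and hence $N \to +\infty$ as $\hat\varepsilon \to 0^+$. In this notation $z_{\hat\varepsilon}(b) = (b-1)\ceil{\log_b N}$, so the only non-elementary ingredient is the ceiling, everything else being a fixed multiplicative constant times a logarithm.

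Next I would bound the ceiling from both sides: for every real $x$ one has $x \le \ceil{x} \le x+1$, hence $\log_b N \le \ceil{\log_b N} \le \log_b N + 1$ for each base $b>1$. Applying this to the numerator and denominator of $z_{\hat\varepsilon}(b_1)/z_{\hat\varepsilon}(b_2)$ gives, for all $\hat\varepsilon$ small enough that $N>1$ (so that both logarithms are positive),
\[
\frac{(b_1-1)\log_{b_1} N}{(b_2-1)\lrp{\log_{b_2} N + 1}} \;\le\; \frac{z_{\hat\varepsilon}(b_1)}{z_{\hat\varepsilon}(b_2)} \;\le\; \frac{(b_1-1)\lrp{\log_{b_1} N + 1}}{(b_2-1)\log_{b_2} N}.
\]

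Then I would rewrite each logarithm in the natural base via $\log_b N = \tfrac{\ln N}{\ln b}$ and divide the numerator and denominator of the two outer fractions by $\ln N$. Because $\ln N \to +\infty$, every term arising from a "$+1$" contributes a factor $\tfrac{1}{\ln N}\to 0$, so both the lower and the upper bound converge to $\dfrac{(b_1-1)/\ln b_1}{(b_2-1)/\ln b_2} = \dfrac{(b_1-1)\ln(b_2)}{(b_2-1)\ln(b_1)}$. The squeeze theorem then delivers the asserted value of $z'(b_1,b_2)$.

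There is essentially no hard step; the only point requiring care is that the ceiling can inflate $z_{\hat\varepsilon}(b)$ by the bounded quantity $b-1$, which must be shown asymptotically negligible against the divergent term $(b-1)\log_b N$ — precisely what the sandwich accomplishes. Equivalently, one could write $\ceil{\log_b N} = \log_b N + O(1)$ and cancel the common divergent factor $\ln N$ directly; I would expect this rephrasing to be the cleanest way to present the final line.
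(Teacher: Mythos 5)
Your proof is correct and follows essentially the same route as the paper's: the paper simply asserts that the ceiling can be removed asymptotically and then computes the same ratio of logarithms, whereas you justify that removal explicitly with the sandwich $x \le \ceil{x} \le x+1$. Your version is a more careful write-up of the identical argument.
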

\begin{proof} Asymptotically, as $\hat{\varepsilon} \to 0^+$, we can remove the ceiling function in the definition of $z_{\hat{\varepsilon}}$. Hence
\begin{equation*}
     z'(b_1,b_2) = \ds \lim_{\hat{\varepsilon} \to 0^+} \dfrac
     {(b_1-1) \log_{b_1} \lrp{\dfrac{u-l}{\hat{\varepsilon}}}}
     {(b_2-1) \log_{b_2} \lrp{\dfrac{u-l}{\hat{\varepsilon}}}}
     = \dfrac{(b_1-1)}{(b_2-1)}\log_{b_1}(b_2) = \dfrac{(b_1-1)\ln(b_2)}{(b_2-1) \ln(b_1)}.
\end{equation*}
\end{proof}
Because $z_{\hat \varepsilon}(b)$ is an increasing function of $b$, $z'(b,2)>1$, for all $b>2$.  Using the proposition, we compute $z'(b,2)$ for various values of $b$ in~\ref{tab:my_label}.  In~\cite{Castro2015c}, Castro uses $b=10$.  As the table shows, this binarization will use nearly three times as many binary variables.
\begin{table}[H]
    \centering
        \begin{tabular}{c|ccccccccccccccc}
         $b$ & 3 & 4 & 5 & 6 & 7 & 8 & 9 & 10 \\
         \hhline{-|-----------}
         $z'(b,2)$
             & 1.26186
              & 1.5
              & 1.72271
              & 1.93426
              & 2.13724
              & 2.33333
              & 2.52372
             & 2.70927
    \end{tabular}
    \caption{Asymptotic number of binary variables required for base $b$, as a multiple of the number required for base $2$.}
    \label{tab:my_label}
\end{table}
In summary, we suggest to use NMDT with a base 2 because it will require the fewest number of binary variables to achieve a desired accuracy.

\subsubsection{Implied and redundant equations}

\label{sssec_MILP_var_elim}
Constraint~\eqref{eq_outflow} (and subsequently~\eqref{eq_specOutflow}) couples the variables $\SPL_{k,t},\FEED_{k,t}$, and $\INV_{k,t}$.  For convenience, let us write $x_1 = \SPL_{k,t}$, $x_2 = \FEED_{k,t}$, $x_3 = \INV_{k,t}$, and $f = \SPEC_{k,q,t}$.
That is, 
\begin{align}
    x_1&=x_2+x_3. \label{wcouple}
\end{align}
As a consequence of \eqref{wcouple}, by multiplying through by $f, \Delta f$, and $\alpha_i$,  we obtain three sets of valid equations
\begin{subequations}
\begin{align}
    \xsf^\fsf_1&=\xsf^\fsf_2+\xsf^\fsf_3, \label{vcouple}\\
            \xsf^{\Delta \fsf}_1 &= \xsf^{\Delta \fsf}_2 + \xsf^{\Delta \fsf}_3,\label{ucouple_delta}\\
             \xsf^{\alphasf_i}_1 &= \xsf^{\alphasf_i}_2 + \xsf^{\alphasf_i}_3 & i=1,\dots, n. \label{ucouple}
\end{align}
\end{subequations}
Consider the approximation of $f$ and the resulting product form of the equations
\begin{subequations}
\begin{align}
    f &= l + \Delta f+ \varepsilon \dsum_{i=1}^n 2^{i-1} \cdot \alpha_i\\
    \xsf^\fsf_{\j} &=l \cdot x_{\j} + \xsf^{\Delta \fsf}_\kappa + \varepsilon \dsum_{i=1}^n 2^{i-1}\cdot \xsf^{\alphasf_{i}}_\j && \j =1,2,3 \label{vreps}\\
  \xsf^{\alphasf_{i}}_{\j} &= x_{\j} \cdot \alpha_i && i =1,\dots,n,\  \j = 1,2,3 \label{udef}
\end{align}
\end{subequations}
Of key importance here for model reduction and a stronger formulation are the observations: (1) Bilinear terms can be eliminated using the above equations (2) Applying McCormick relaxations to these bilinear terms before variable elimination produces a tighter formulation.  

First, although other model choices are possible, we project out the variables associated with $\kappa = 1$.  Conveniently, equations \eqref{vreps} and \eqref{udef}, for $\kappa = 1$, are both implied by all other equations.  In fact, 
variables $\xsf^{\fsf}_1$, $\xsf^{\Delta \fsf}_1$ and $\xsf^{\alphasf_i}_1$ by substitution, can be projected out of the model using equations \eqref{wcouple}, \eqref{ucouple_delta}, and \eqref{ucouple}, then the equations \eqref{vreps} and \eqref{udef} become tautologies.   This observation provides convenient reduction choices in the model in terms of both variables and constraints.

Second, applying the projection/linear relaxation in the right order can improve the model.  In particular, let $\mathcal F$ represent the feasible set given by the above equations, let $\mathcal M^{123}(x,\alpha_\kappa)$ be the McCormick relaxation in the space of all variables (similarly, $M^{23}(x,\alpha_\kappa)$ in the space of variables associated with $\kappa = 2,3$), $\proj_{23}$ be the projecting into the space of variables with $\kappa = 2,3$, and $\relax$ denote the removal of bilinear equations \eqref{udef}.

Then we have the strict inclusion
\begin{equation}
\relaxation\left(\proj_{23}\left(\mathcal F \cap  \bigcap_{\kappa = 1,2,3}\mathcal{M}^{123}(x, \alpha_\kappa)\right)\right)\, \subsetneq \, \relaxation\left(\proj_{23}(\mathcal F) \cap \bigcap_{\kappa = 2,3}\mathcal{M}^{23}(x, \alpha_\kappa)\right).
\end{equation}

That is, using the inequalities from the McCormick relaxation from the $\kappa=1$ variables improves the formulation.

\subsection{MILP Models}
We will now outline the two models we propose to consider and then discuss how to integrate them with rolling planning.
\subsubsection{\Center Approximation} \label{sssec_approx_method}
We create a mixed integer linear approximation to the main problem that will only approximately track constraints on specifications.  To check feasibility of the resulting flows $y^{\text{in}}$ and $y^{\text{out}}$, these solutions must be plugged into the main model to recover spec values and ensure the constraints are met.  
The approximation error will be guided by how fine the discrete approximation of the spec variables is.  For the model, a prevision $\hat{\varepsilon}_{q}$ is chosen such that, given an accurate discretization, we should have $|f_{k,q,t} - \dot{f}_{k,q,t}| \leq \hat{\varepsilon}$.  The choice of $\hat{\varepsilon}_{q}$, as discussed earlier, defines the choices of $\epsilon_q, n_q$ and $m_q$ which are implied parameters in the sets $\mathcal{D}(x,f_{k,q,t}, \alpha)$.  Thus, for each spec $q$, there are (potentially) unique $\epsilon, n$ and $m$ that are used for the set $\mathcal{D}(x,f_{k,q,t}, \alpha)$.
We write all products in terms of single variables and we relax \eqref{eq_infspec} to increase the number of feasible solutions to the approximate model.  
We thus model \eqref{eq_specOutflow},\eqref{eq_infspec},\eqref{eq_feedspec_3},\eqref{eq_feedspecrat_3} as
\begin{subequations}
\begin{align}
\SVOLSPL_{k,q,t} - \SVOLFEED_{k,q,t} &= \SVOLINV_{k,q,t} & \,& k \in K, t \in T, \label{eq_specOutflow_disc}\\
	\SVOLSPL_{k,q,t} - \frac{\varepsilon_{q}}{2} \cdot \SPL_{k,q,t}
	&\le \displaystyle \sum_{s \in S_k} \spec_{s,q} \cdot \INB_{s,k,t} + \SVOLINV_{k,q,t-1} & \,&  k \in K, q \in Q, t \in T,\label{eq_infspec_disc_ub}\\
	\SVOLSPL_{k,q,t} + \frac{\varepsilon_{q}}{2} \cdot \SPL_{k,q,t}
	&\ge \displaystyle \sum_{s \in S_k} \spec_{s,q} \cdot \INB_{s,k,t} + \SVOLINV_{k,q,t-1} & \,&  k \in K, q \in Q, t \in T,\label{eq_infspec_disc_lb}\\
	\spec^{\low}_{q,t} \cdot \dsum_{k \in K} \FEED_{k,t} &\le  
	\dsum_{k \in K} \SVOLFEED_{k,q,t} \le 
	\spec^{\upp}_{q,t} \cdot \dsum_{k \in K} \FEED_{k,t} & 
	\,& t \in T,~q \in Q, \label{eq_feedspec_3_disc}\\
	 \specRat^{\low}_{q_1,q_2,t}\cdot \dsum_{k \in K} \SVOLFEED_{k,q_2,t}  &\leq 
	 \dsum_{k \in K} \SVOLFEED_{k,q_1,t} \leq 
	\specRat^{\upp}_{q_1,q_2,t} \cdot\dsum_{k \in K} \SVOLFEED_{k,q_2,t}   
	 & \, & t \in T,~(q_1,q_2) \in Q_{\rat}, \label{eq_feedspecrat_3_disc}
\end{align}
\end{subequations}
The remaining constraints are generated using $\mathcal{\tilde D}$ with $\lambda_0 = \spec^{\min}_{k,q}$ and  $\Delta f_{k,q,t} = \frac{\varepsilon_{q}}{2}$ and $b=2$,
\begin{subequations}
\begin{align}
 &(\SPL_{k,q,t}, \alpha_{k,q,t}, \SVOLSPL_{k,q,t},\SVOLSPLdelta_{k,q,t}, \AVOLSPL_{k,q,t} ) \in \mathcal{\tilde D}(\SPL_{k,q,t},f_{k,q,t}, \alpha_{k,q,t} ) &  k \in K, q \in Q, t \in T,\label{eq_tildeD_vmid} \\
 &(\INV_{k,q,t}, \alpha_{k,q,t}, \SVOLINV_{k,q,t},\SVOLINVdelta_{k,q,t}, \AVOLINV_{k,q,t} ) \in \mathcal{\tilde D}(\INV_{k,q,t},f_{k,q,t}, \alpha_{k,q,t} ) &  k \in K, q \in Q, t \in T,\label{eq_tildeD_vend}\\
 &(\FEED_{k,q,t}, \alpha_{k,q,t}, \SVOLFEED_{k,q,t}, \SVOLFEEDdelta_{k,q,t}, \AVOLFEED_{k,q,t} ) \in \mathcal{\tilde D}(\FEED_{k,q,t},f_{k,q,t}, \alpha_{k,q,t})&  k \in K, q \in Q, t \in T. \label{eq_tildeD_yout}
\end{align}
\end{subequations}
Note that each $\alpha_{k,q,t}$ is a vector of $n_{k,q,t}$ binary variables, which may vary in number depending on the accuracy used to approximate $f_{k,q,t}$.

\subsubsection{Redundant equations}
Defining both sets of constraints in \eqref{eq_tildeD_vmid}, \eqref{eq_tildeD_vend} creates some redundant inequalities.  In particular, after omitting the $k,q,t$ indices for convenience, the inequalities 
\begin{subequations}
\begin{align}
& \AVOLSPL_{i,j} \ge \underbar{v}^{\min}_k \cdot \alpha_{i,j}  &   i \in I_{k,q,t}, j \in \{0,1\} \label{eq_AVOLSPLdef_lb}\\
& \AVOLINV_{i,j} \le \underbar{v}^{\max}_k \cdot \alpha_{i,j}   &   i \in I_{k,q,t}, j \in \{0,1\} \label{eq_AVOLINVdef_ub}
\end{align}
\end{subequations}
are redundant due to \eqref{eq_outflow} and the non-negativity of all volume variables.  
\subsubsection{Tightening feasible region to produce feasible solutions\label{sssec_tighten}}
Because the proposed model is an approximation of the original problem, the model might produce a schedule with flows that result in infeasible specs - violating either the spec bounds or spec ratio bounds.  However, in most cases, the magnitude of these violations we observed to be within $\hat{\varepsilon}/2$ for the proposed discretization methods.

Thus, to mitigate this error, we add a buffer of $\hat{\varepsilon}/2$ to each spec requirement, so that the interval $[\spec^{\low}_{t,q}, \spec^{\upp}_{t,q}]$ is replaced with $[\spec^{\low}_{t,q}+\hat{\varepsilon}_q/2, \spec^{\upp}_{t,q}-\hat{\varepsilon}_q/2]$.

As for the spec ratios, we used differentials of the ratio to obtain an upper approximation for the magnitude of the buffer to use: If the spec ratio to control is $\dfrac{\SPEC_{q_1}}{\SPEC_{q_2}}$, where $\SPEC_{q_1}$ and $\SPEC_{q_2}$ are spec qualities to control, the buffer to use is given via the differential:
\begin{equation}
	\Delta\lrp{\dfrac{\SPEC_{q_1}}{\SPEC_{q_2}}} = \dfrac{1}{\SPEC^{\min}_2} \Delta{\SPEC_{q_1}} +  \dfrac{\SPEC^{\max}_1}{\lrp{\SPEC^{\min}_2}^2} \Delta{\SPEC_{q_1}},
\end{equation}
where $\Delta \SPEC_{q_1}$ and $\Delta \SPEC_{q_2}$ are the computed buffers $\hat{\varepsilon}/2$ for $\SPEC_{q_1}$ and $\SPEC_{q_2}$, respectively. Thus, the interval $[\specRat^{\low}_{q_1,q_2,t},\specRat^{\upp}_{q_1,q_2,t}]$ enforced for $\lrp{\frac{\SPEC_{q_1}}{\SPEC_{q_2}}}$ is replaced by $[\specRat^{\low}_{q_1,q_2,t} + \Delta\lrp{\frac{\SPEC_{q_1}}{\SPEC_{q_2}}},\specRat^{\upp}_{q_1,q_2,t} - \Delta\lrp{\frac{\SPEC_{q_1}}{\SPEC_{q_2}}}]$. These buffers proved effective in largely resolving the issue of incorrect feed specs and spec ratios for the problems tested. The buffers could be reduced if they would result in possible ranges for specs or spec ratios that are smaller than the range provided by a single discretization bin (based on initial tank and supply inventories and the parameter $\hat{\varepsilon}_q$ specified by the user). However, for the purposes of this work, we will assume that $\hat{\varepsilon}_q$ is small enough that the resulting ranges for the demand specs are sufficiently large (with width $\ge \hat{\varepsilon}$ after tightening) that feasibility issues do not become problematic for the proposed discretization scheme.

These buffers result in a replacement of the parameters $\spec^{\upp}_{t,q}$, $\spec^{\low}_{t,q}$, $\specRat^{\upp}_{t,q}$, and $\specRat^{\low}_{t,q}$ in Section~\ref{sec:Mathematical-Model} with their `buffered' counterparts, tightened further by the possible feed specs based on initial tank inventory and inbound supply specs.

\subsubsection{McCormick Based Approximation}
\label{sec:NMDT+McCormick}
In a similar way, the final model for the original NMDT method (with McCormick envelopes around the continuous product terms) is given as follows:
\begin{subequations}
\begin{align}
	 \SVOLSPL_{k,q,t} - \SVOLFEED_{k,q,t} &= \SVOLINV_{k,q,t} & \,& k \in K, t \in T \label{eq_specOutflow_disc_NMDT}\\
	\SVOLSPL_{k,q,t} 
	&=\displaystyle \sum_{s \in S_k} \spec_{s,q} \cdot \INB_{s,k,t} + \SVOLINV_{k,q,t-1} & \,&  k \in K, q \in Q, t \in T \tag{\eqref{eq_infspec}-relax}\label{eq_infspec_disc_NMDT}\\
	\spec^{\low}_{q,t} \cdot \dsum_{k \in K} \FEED_{k,t} &\le  
	\dsum_{k \in K} \SVOLFEED_{k,q,t} \le 
	\spec^{\upp}_{q,t} \cdot \dsum_{k \in K} \FEED_{k,t} & 
	\,& q \in Q, t \in T \label{eq_feedspec_3_disc_NMDT}\\
	 \dsum_{k \in K} \SVOLFEED_{k,q_2,t} \cdot \specRat^{\low}_{q_1,q_2,t} &\leq 
	 \dsum_{k \in K} \SVOLFEED_{k,q_1,t} \leq 
	 \dsum_{k \in K} \SVOLFEED_{k,q_2,t} \cdot \specRat^{\upp}_{q_1,q_2,t} 
	 & \, & t \in T,~(q_1,q_2) \in Q_{\rat}  \label{eq_feedspecrat_3_disc_NMDT}
\end{align}
\end{subequations}
Where, for all $k \in K, q \in Q, t \in T$,
\begin{subequations}
\begin{align}
 &(\SPL_{k,q,t},\Delta f_{k,q,t}, \alpha_{k,q,t}, \SVOLSPL_{k,q,t}, \AVOLSPL_{k,q,t} ) \in \mathcal{\tilde D}(\SPL_{k,q,t},f_{k,q,t}, \alpha_{k,q,t} ) &  \KQT \\
 &(\INV_{k,q,t},\Delta f_{k,q,t}, \alpha_{k,q,t}, \SVOLINV_{k,q,t}, \AVOLINV_{k,q,t} ) \in \mathcal{\tilde D}(\INV_{k,q,t},f_{k,q,t}, \alpha_{k,q,t} ) &  \KQT\\
 &(\FEED_{k,q,t},\Delta f_{k,q,t}, \alpha_{k,q,t}, \SVOLFEED_{k,q,t}, \AVOLFEED_{k,q,t} ) \in \mathcal{\tilde D}(\FEED_{k,q,t},f_{k,q,t}, \alpha_{k,q,t})&  \KQT\\
& \Delta \SVOLSPL_{k,q,t} \in [0, \varepsilon \cdot \underbar{v}^{\max}_{k,q,t}] & \KQT\\
& \Delta \SVOLINV_{k,q,t} \in [0, \varepsilon \cdot \underbar{v}^{\max}_{k,q,t}] & \KQT\\
& \Delta \SVOLFEED_{k,q,t} \in [0, \varepsilon \cdot \demand_{k,q,t}] & \KQT
\end{align}
\end{subequations}
Finally, the continuous product terms $\Delta \SVOLSPL_{k,q,t}$, $\Delta \SVOLINV_{k,q,t}$, and $\Delta \SVOLFEED_{k,q,t}$ are modeled via their McCormick envelopes, for all $k\in K, q \in Q, t \in T$,
\begin{subequations}
\begin{align}
\Delta \SPEC_{k,q,t} &\in [0, \varepsilon] & & \KQT\\
(\Delta \SPEC_{k,q,t}/\varepsilon,\SPL_{k,q,t},\Delta \SVOLSPL_{k,q,t}) &\in \mathcal{M}(\Delta \SPEC_{k,q,t}, \SPL_{k,q,t}) & & \KQT\\
(\Delta \SPEC_{k,q,t}/\varepsilon,\INV_{k,q,t},\Delta \SVOLINV_{k,q,t}) &\in \mathcal{M}(\Delta \SPEC_{k,q,t}, \INV_{k,q,t}) & &  \KQT\\
(\Delta \SPEC_{k,q,t}/\varepsilon,\FEED_{k,q,t},\Delta \SVOLFEED_{k,q,t}) &\in \mathcal{M}(\Delta \SPEC_{k,q,t}, \FEED_{k,q,t}) & & \KQT
\end{align}
\end{subequations}

\subsubsection{Error propagation in time horizon}

From a perspective of numerical error propagation, the McCormick-based approach results in an exactly-represented tank-inflow blending process (constraint \eqref{eq_infspec}) for our problem: since only two `spec $\cdot$ volume' terms contain variable specs, the volumes of the specs are coupled correctly. On the other hand, the splitting processes in \eqref{eq_feedspec} are inexact, since the McCormick envelopes around the $\Delta f$ product terms effectively allow the specs leaving a tank to differ slightly from the specs propagated to the next time period. However, due to the tightness of the McCormick envelopes, once a post-blending value $\tilde{f}$ is chosen for a tank, the represented specs will remain in the interval $[\tilde{f}, \tilde{f}+\varepsilon]$ until the next blending operation in the tank, allowing a maximum possible spec error in the interval $[\varepsilon/2, \varepsilon]$ to accrue between blending operations.

On the other hand, the \Center approximation presented here incurs an error of up to $\varepsilon/2$ immediately after blending into the tank, but then accrues no additional numerical error within that tank until the next blending operation. This allows the model to remain closer to feasibility for cases in which each tank does not receive supply from barges very frequently.

\subsection{Rolling Horizon Approach}

As mentioned in Section~\ref{sec:introduction}, we need a blending and scheduling optimizer that can efficiently optimize over a scheduling horizon of at least 3 months and up to 12 months. For the instances of interest, the target time for this optimization is typically preferred to be no more than five minutes (with a ten minute maximum). However, even the linear discrete approximation above is extremely slow even for a scheduling horizon of 120 days, requiring several hours of computation time in Gurobi 8.1, even for extremely course spec discretization (with a total of roughly \textit{six} binary variables per tank per day for only three tanks). 
 As such, we employ a rolling planning approach.

We present three major alternatives for rolling planning. These alternatives are characterized treatment of the `past', `present', and `future', coupled with the choice of time periods (`present') to use for each rolling planning step. The `past' is the portion of the planning horizon before the current rolling horizon window, the `present' is the current horizon window, and the `future' is the portion of the planning horizon after the current rolling horizon window. For the considered schemes, the `future' is split into two partitions: the `near future' and the `far future'. We consider two alternatives for the treatment of the `past' and `future', and two major schemes for the choice of time periods.

We will compare two schedules for rolling planning - \emph{fixed-length} periods and \emph{run-based} periods. In both schemes, periods are chosen to be disjoint since previous performed test with overlapping periods performed slower.

Our fixed-length periods are simply equal length periods of some length $\Delta t$, except the last period may be truncated to not exceed the horizon length $H$.  See Figure~\ref{fig:planning1}.
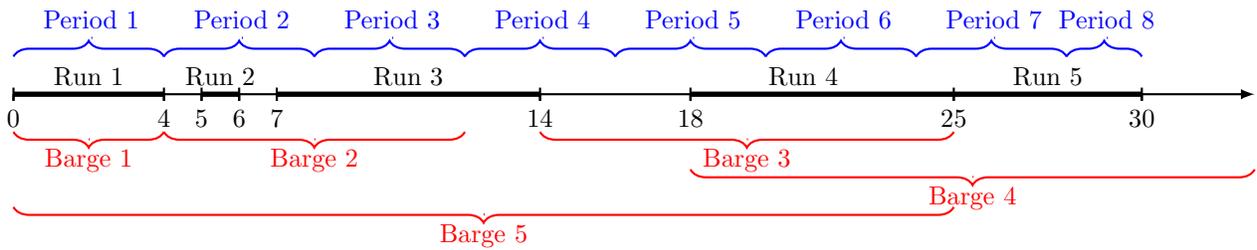
\begin{figure}[H]
    \tikzstyle{barge}=[decoration={brace,amplitude=6pt},decorate, red]
\tikzstyle{periods}=[decoration={brace,amplitude=6pt},decorate, blue]
    \centering
\begin{tikzpicture}[thick,scale=0.5, every node/.style={scale=0.9}]%

\draw[-latex] (0,0) -- (33,0);
\foreach \x in  {0,4,5,6,7,14,18,25,30}
\draw[shift={(\x,0)},color=black] (0pt,5pt) -- (0pt,-5pt);
\foreach \x in {0,4,5,6,7,14,18,25,30}
\draw[shift={(\x,0)},color=black] (0pt,0pt) -- (0pt,-5pt) node[below] 
{$\x$};

\draw[line width= 2] (0,0) -- (4,0);
\draw[color=black] (2,0) -- (2,0) node[above] {Run 1};
\draw[line width= 2] (5,0) -- (6,0);
\draw[color=black] (5.5,0) -- (5.5,0) node[above] {Run 2};
\draw[line width= 2] (7,0) -- (14,0);
\draw[color=black] (10.5,0) -- (10.5,0) node[above] {Run 3};
\draw[line width= 2] (18,0) -- (25,0);
\draw[color=black] (21.5,0) -- (21,0) node[above] {Run 4};
\draw[line width= 2] (25,0) -- (30,0);
\draw[color=black] (27.5,0) -- (27.5,0) node[above] {Run 5};

\foreach \x in {0,4,8,12,16,20,24}
\draw [shift={(\x,0.5)}, periods](0,0.5) -- (4,0.5);

\foreach \x in {1, 2, 3, 4, 5, 6, 7}%
\draw[shift={(\x*4-2,1.5)},color=blue] (2pt,0pt) -- (2pt,0pt) node[above] {Period $\x$};

\draw [shift={(28,0.5)}, periods](0,0.5) -- (2,0.5);
\draw[shift={(29,1.5)},color=blue] (2pt,0pt) -- (2pt,0pt) node[above] {Period $8$};

\draw [barge] (4,-1) -- (0,-1);
\draw[color=red] (2,-1.2) -- (2,-1.2) node[below] {Barge 1};
\draw [barge] (12,-1) -- (4,-1);
\draw[color=red] (8,-1.2) -- (8,-1.2) node[below] {Barge 2};
\draw [barge] (25,-1) -- (14,-1);
\draw[color=red] (19.5,-1.2) -- (19.5,-1.2) node[below] {Barge 3};
\draw [barge] (33,-2) -- (18,-2);
\draw[color=red] (25.5,-2.2) -- (25.5,-2.2) node[below] {Barge 4};
\draw [barge] (25,-3) -- (0,-3);
\draw[color=red] (12.5,-3.2) -- (12.5,-3.2) node[below] {Barge 5};

\end{tikzpicture}
    \caption{This figure shows specified demand runs, the barge time windows, and the periods that we use in the rolling horizon calculation for the fixed-length periods scheme (and considering a horizon $H= 30$ days).}
    \label{fig:planning1}
\end{figure}
Although this period schedule is consistent, it subdivides runs.  This makes modeling consistent volume feed throughout a run \eqref{eq_contfeed} complicated in the rolling planning process and can lead to poor choices early in the rolling planning.    To alleviate this issue, we propose a 'run-based' schedule.

The run-based periods either 
\begin{itemize}
\item Continue in a run to the end of the run or,
\item  Encompass all periods and gaps (days between runs) that are completely contained within a time change of $\Delta t$ from the beginning of period.  
\end{itemize}
This choice makes it so that periods and gaps are never subdivided.   This is depicted in Figure~\ref{fig:planning2}, where a sample partition resulting from this procedure, using $\Delta t=7$, is illustrated.  Formally, our choice of periods is described by Algorithm~\ref{algorithm-run-based}.

\renewcommand{\algorithmicrequire}{\textbf{Input: }}
 \renewcommand{\algorithmicensure}{\textbf{Output: }}
\begin{algorithm}
\algorithmicrequire{Desired approximate length $\Delta t$ for a single rolling planning sub-period}\\
\algorithmicensure{Sub-periods $P_i$ for rolling planning.}
\caption{Rolling Planning with Run Based Sub-periods.}\label{alg:fixed-period}
\begin{algorithmic}%
	\State $t_0\leftarrow 0$, $i \leftarrow 0$ \Comment{Initialize beginning of current rolling planning step and index.}
	\While{$t_i<H$}
		\State $t^{\text{end}} \leftarrow $ first ending time period of a run that is $\geq t_i$
		\State $t^{\text{change}} \leftarrow $ latest ending time period of a run or gap between runs that is $\leq t_i + \Delta t$ 
		\State $t_{i+1} \leftarrow \max(t^{\text{end}},t^{\text{change}}) + 1$
		 \State $P_i \leftarrow [t_i, t_{i+1})$
        \State $i \leftarrow i + 1$
	\EndWhile
\end{algorithmic}
\label{algorithm-run-based}
\end{algorithm}

\begin{figure}[H]
    \tikzstyle{barge}=[decoration={brace,amplitude=6pt},decorate, red]
\tikzstyle{periods}=[decoration={brace,amplitude=6pt},decorate, blue]
    \centering
\begin{tikzpicture}[thick,scale=0.5, every node/.style={scale=0.9}]%

\draw[-latex] (0,0) -- (33,0);
\foreach \x in  {0,4,5,6,7,14,18,25,30}
\draw[shift={(\x,0)},color=black] (0pt,5pt) -- (0pt,-5pt);
\foreach \x in {0,4,5,6,7,14,18,25,30}
\draw[shift={(\x,0)},color=black] (0pt,0pt) -- (0pt,-5pt) node[below] 
{$\x$};

\draw[line width= 2] (0,0) -- (4,0);
\draw[color=black] (2,0) -- (2,0) node[above] {Run 1};
\draw[line width= 2] (5,0) -- (6,0);
\draw[color=black] (5.5,0) -- (5.5,0) node[above] {Run 2};
\draw[line width= 2] (7,0) -- (14,0);
\draw[color=black] (10.5,0) -- (10.5,0) node[above] {Run 3};
\draw[line width= 2] (18,0) -- (25,0);
\draw[color=black] (21.5,0) -- (21,0) node[above] {Run 4};
\draw[line width= 2] (25,0) -- (30,0);
\draw[color=black] (27.5,0) -- (27.5,0) node[above] {Run 5};

\draw [shift={(0,0.5)}, periods](0,0.5) -- (7,0.5);
\draw[shift={(3.5,1.5)},color=blue] (2pt,0pt) -- (2pt,0pt) node[above] {Period $1$};
\draw [shift={(7,0.5)}, periods](0,0.5) -- (7,0.5);
\draw[shift={(10.5,1.5)},color=blue] (2pt,0pt) -- (2pt,0pt) node[above] {Period $2$};
\draw [shift={(14,0.5)}, periods](0,0.5) -- (4,0.5);
\draw[shift={(16,1.5)},color=blue] (2pt,0pt) -- (2pt,0pt) node[above] {Period $3$};
\draw [shift={(18,0.5)}, periods](0,0.5) -- (7,0.5);
\draw[shift={(21.5,1.5)},color=blue] (2pt,0pt) -- (2pt,0pt) node[above] {Period $4$};
\draw [shift={(25,0.5)}, periods](0,0.5) -- (5,0.5);
\draw[shift={(27.5,1.5)},color=blue] (2pt,0pt) -- (2pt,0pt) node[above] {Period $5$};

\draw [barge] (4,-1) -- (0,-1);
\draw[color=red] (2,-1.2) -- (2,-1.2) node[below] {Barge 1};
\draw [barge] (12,-1) -- (4,-1);
\draw[color=red] (8,-1.2) -- (8,-1.2) node[below] {Barge 2};
\draw [barge] (25,-1) -- (14,-1);
\draw[color=red] (19.5,-1.2) -- (19.5,-1.2) node[below] {Barge 3};
\draw [barge] (33,-2) -- (18,-2);
\draw[color=red] (25.5,-2.2) -- (25.5,-2.2) node[below] {Barge 4};
\draw [barge] (25,-3) -- (0,-3);
\draw[color=red] (12.5,-3.2) -- (12.5,-3.2) node[below] {Barge 5};

\end{tikzpicture}
    \caption{This figure shows specified demand runs, the barge time windows, and the periods that we use in the rolling horizon calculation for the `run-based periods' scheme (and considering $H = 30$ days).}
    \label{fig:planning2}
\end{figure}
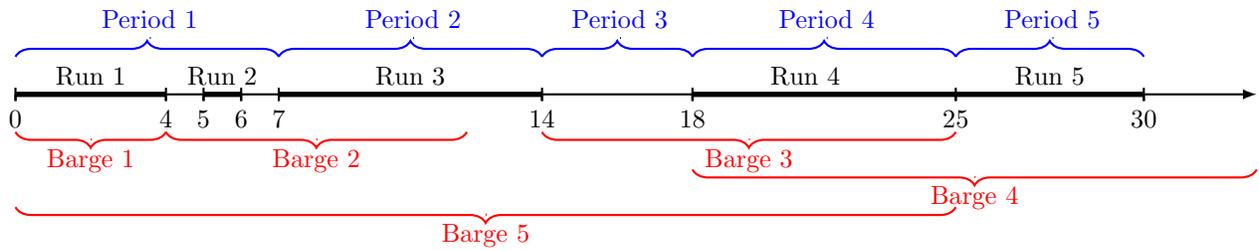

Next, we will discuss the major schemes for the treatment of the past, present, near future, and future segments of the scheduling horizon.  These schemes include a full-horizon scheme and a partial-horizon scheme.

\paragraph{Full-Horizon Scheme}
\newcommand{\nf}{\text{NF}}

Given the current rolling horizon period $P_i = [t_i, t_i+1)$ and a near future horizon $H^{\nf}$, let $t_i^{\nf} = \min(H,t_i + H^{\nf} - 1)$.   In our examples, we use $H^{\nf} = 90$.

\begin{enumerate}
    \item In the past, we freeze the binary decisions made, but allow all continuous variables to continue to vary. This allows the model some additional ability to compensate for poor early decisions, which (as we will see) helps to improve the optimality gap substantially.
    \item In the present, the full model is used.
    \item In the near future, the binary variables related to demand decisions and discretized tank compositions are relaxed to the interval $[0,1]$, while the binary variables related to supply decisions are enforced to be binary. All other constraints are included.
    \item In the far future, all binary variables are relaxed to the interval $[0,1]$. All other constraints are included.
\end{enumerate}

The decision not to relax supply decision binaries in the `near future' is due to the inclusion of the `limited unloading window' constraints for barges. 
These constraints rendered the inclusion of small objective function terms based on supply unloading decisions computationally prohibitive, due to poor branching performance of the solver. Before these were included, small objective function terms had been added to slightly favor earlier unloading times (all else equal), encouraging barges to be unloaded more quickly (and allowing for a model that more consistently avoided small unnecessary supply misses). After the inclusion of these constraints, the solver performance with these small objective terms became extremely poor, requiring at least an order of magnitude more computational time to solve to the same optimality gap in preliminary tests via Gurobi 8.1. When these terms were removed, the model began to exhibit very large extraneous supply misses, due in part to the poor linear relaxation of the `limited unloading window' constraints, and in part to relaxations of the constraints limiting the number of unloads per barge. These poor relaxations caused the rolling planning model to favor unloading during the relaxed-binary periods, a decision which the model could not recover from. However, by including the supply-based binaries in the `near future' (defined to end 90 days after the start of the `present' period), preliminary testing showed that these unnecessary misses became small enough that their cost was considered to be acceptable, while the inclusion of these extra binary variables had only a moderate impact on computational cost (close to a $50\%$ increase).

For all rolling horizon methods, we use 0.5\% as the MIP gap.  For the Full-Horizon scheme, though experimental results we learned that increasing this, on average, does not significantly reduce computation time, but may have a large effect on resulting objective values.  In particular, we want to optimize as much as we can in each step since, due to the way we relax variables, the objective value in each subsequent step can only decrease.  In particular, since we are targeting the best objective value possible, we attempt to make optimal decisions at every step.

\paragraph{Partial-Horizon Scheme}
\begin{enumerate}
    \item In the past, we fix all decisions made, and use them to re-compute accurate values for spec compositions to use for the next rolling planning period. Possible $[l,u]$ for the specs in each tank are re-computed, and the number of remaining unloads (and remaining volumes) for barges that have been unloaded in the past are reduced accordingly.
    \item In the present, the full model is enforced.
    \item In the near future, the binary variables related to demand decisions and discretized tank compositions are relaxed to the interval $[0,1]$, while the binary variables related to supply decisions are enforced to be binary. All other constraints are included.
    \item In the far future, all variables and constraints are omitted. 
\end{enumerate}

\begin{table}[H]
\begin{center}
\begin{tabular}{|c|c|c|c|c|}
\multicolumn{5}{c}{Full Horizon Scheme}\\
\hline
Vars & Past & Present & Near & Far\\
&$\left[0, t_i \right)$ & $P_i$ & $\left[t_{i+1}, t_i^{\nf}\right]$ & $\left(t_i^{\nf},H\right]$\\
\hline
$y, v$ & \tuse & \tuse & \tuse& \tuse\\
$ \ysf, \vsf$ & \tuse & \tuse & \tuse& \tuse\\
$d$  & \tuse & \tuse & \tuse& \tuse\\
$t_s$  & \tuse & \tuse & \tuse& \tuse\\
$\gamma$ & \tfix & \tuse & \tuse& \trelax\\
$\sigma$ & \tfix & \tuse & \trelax& \trelax\\
$\alpha$ & \tfix & \tuse & \trelax& \trelax\\
\hline
\end{tabular} \ \ 
\begin{tabular}{|c|c|c|c|c|}
\multicolumn{5}{c}{Partial Horizon Scheme}\\
\hline
Vars & Past & Present & Near & Far\\
&$\left[0, t_i \right)$ & $P_i$ & $\left[t_{i+1}, t_i^{\nf}\right]$ & $\left(t_i^{\nf},H\right]$\\
\hline
$y, v$ & \tfix & \tuse & \tuse & \tomit\\
$ \ysf, \vsf$ & \tfix  & \tuse & \tuse & \tomit\\
$d$ & \tfix & \tuse & \tuse& \tomit\\
$t_s$ & \tfix & \tuse & \tuse& \tomit\\
$\gamma$ & \tfix & \tuse & \tuse& \tomit\\
$\sigma$ & \tfix & \tuse & \trelax& \tomit\\
$\alpha$ & \tfix & \tuse & \trelax& \tomit\\
\hline
\end{tabular}
\end{center}
\caption{Description of how variables are either fixed, fully active, relaxed to be continuous, or omitted in the two versions of the rolling planning horizon schemes.}
\end{table}

In the partial-horizon scheme, we note that it is possible that only a small part the arrival window, even just a single period, for a single barge overlaps with the `present' and `near future'. To compensate for this, we pro-rate the inventory to be unloaded from such barges: If only $10\%$ of the volume on a barge overlaps with the currently-considered period, then we enforce that only $10\%$ of the volume from the barge is to be unloaded in the current rolling planning step. In this way, as the model rolls forwards, more of the barge's unloading window enters the visible horizon window, allowing for improved decisions regarding the volume on the barge.

\subsection{Post-processing and Error Mitigation}
After solving the approximation model, we check feasibility of the scheduled flows in terms of required spec quality bounds.  To do so, using the flow variables $y^{\text{in}}, y^{\text{out}}$, we plug their computed values into the original model from Section~\ref{sec:Mathematical-Model} to uniquely determine the spec values $f_{k,q,t}$.

After the optimization of the discretized model with rolling planning, we compute specs $f_{k,q,t}$ from 
simulate the scheduling solution to reveal the true specs throughout the scheduling horizon, and report the true tank and feed volumes and compositions throughout the horizon in addition to the basic scheduling plan.

During preliminary computational testing, after simulating the scheduling solution to reveal the true specs throughout the scheduling horizon, we found that, without tightening the spec requirements, the MILP discretization techniques for the model often resulted in solutions for which the actual spec and spec ratios for the demand feeds were not within the specified ranges. However, this was largely remedied after tightening the feasible region as described in Section~\ref{sssec_tighten}

\section{Numerical Results}
\begin{table}
    \centering
    \begin{tabular}{c|c|c|c}
       & Tank 1 &  Tank 2 & Tank 3\\
  Total Volume   &    1179 metric tons &  2948 metric tons & 1225 metric tons\\
  Minimum Volume & 158 metric tons & 272 metric tons & 136 metric tons
    \end{tabular}
    \caption{The capacities of the three tanks we consider.  Minimums are about 10\% of total volumes. Note that barges hold between 1200 - 1400 metric tons.  Hence a barge is comparable to our small tanks, and our large tank can hold slightly more than 2 barges.}
    \label{tab:tanks-and-barges}
\end{table}

\begin{table}[]
    \centering
    \begin{tabular}{c|cccc}
        Barge Type & Type 1 & Type 2 & Type 3 & Type 4 \\
        \hline
      Volume & 1240 & 1360 & 1182 & 1360 \\
      $\misInbPenalty$ & 1000 & 800 & 800 & 1000
    \end{tabular}
    \caption{Example data for barge types, their capacities, and related objective values.}
    \label{table_barge_types}
\end{table}

To evaluate the performance of the proposed methods, we constructed a realistic problem with a 368-day scheduling horizon from real operational data with a 119-day scheduling horizon, extended to the desired horizon in a periodic fashion. The resulting problem had a total of 33 supply arrivals over the scheduling horizon, three available storage tanks, two specs ($S_1$ and $S_2$) to track, and one spec ratio to control. Note that the range of possible spec qualities was relatively narrow for some tanks, resulting in relatively few binary variables used to approximate the specs for each tank.   See \eqref{tab:tanks-and-barges} for example data on the storage tanks.  Other relevant parameters include $\maxUnloadsPerDay = 2$, $\maxUnloadTimeGap = 7$, $\minTankFeedPct_k = 10\%$ for each tank, and $\minDailyUnloadPct = 10\%$ for each supply barge. For the objective coefficients, we globally use $\misPenalty_t = 3000$. $\misInbPenalty_t$ depends on the barge type, and is either \$800 or \$1000 for each type, as specified in Table~\ref{table_barge_types}. Figure \ref{fig:barge_windows_150} showcases the unloading and arrival windows for a sample instance with $H=150$, while Figures~\ref{fig:sln_profiles},\ref{fig:load_unload}, showcase sample solution schedules with $H=40$. Note that the high variability in bounds for the $S1/S2$ ratios is due to the substantially different requirements for product produced in different runs.

Most computational experiments are performed on a Dell desktop with an Intel Xeon 3.2GHz CPU with 8 cores and 16 threads, and 32 GB of RAM, using Gurobi 8.1 in Python 3 on a Windows 10 operations system. The computational experiments in Section~\ref{ssec_grbcomp} are performed on a Dell desktop with an Intel Core i7-7820X 3.6GHz CPU with 8 cores and 16 threads, and 32 GB of RAM, using Gurobi 9.0.1 in Python 3 on a Windows 10 operations system.

\afterpage{
 \begin{landscape}
\begin{figure}
    \centering
    \includegraphics[scale = .45]{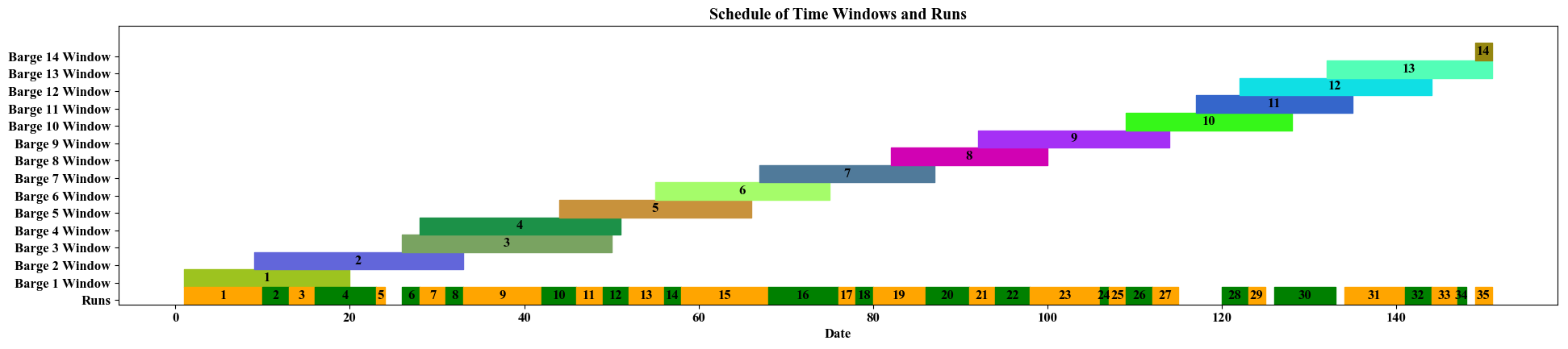}
    \caption{Example of time windows for barge arrivals and dates of runs on a 150 day time horizon.}
    \label{fig:barge_windows_150}
\end{figure}

\begin{figure}
    \centering
    \begin{tabular}{cc}
         \includegraphics[scale = 0.8]{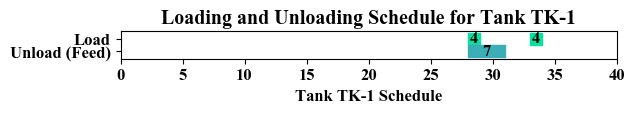}\\
         \includegraphics[scale = 0.8]{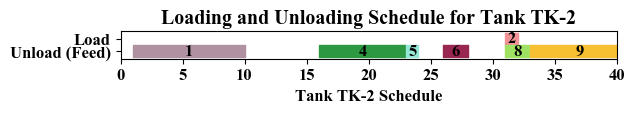}\\
         \includegraphics[scale = 0.8]{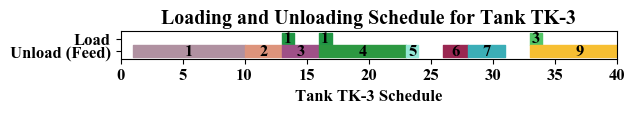}
    \end{tabular}
    \caption{Example of solution for schedules of loading to tanks and  unloading (feed) from tanks to production for a 40 day time horizon.}
    \label{fig:load_unload}
\end{figure}
 \end{landscape}
 }
 
 \begin{figure}
    \centering
    \begin{tabular}{c}
         \includegraphics[scale = .2]{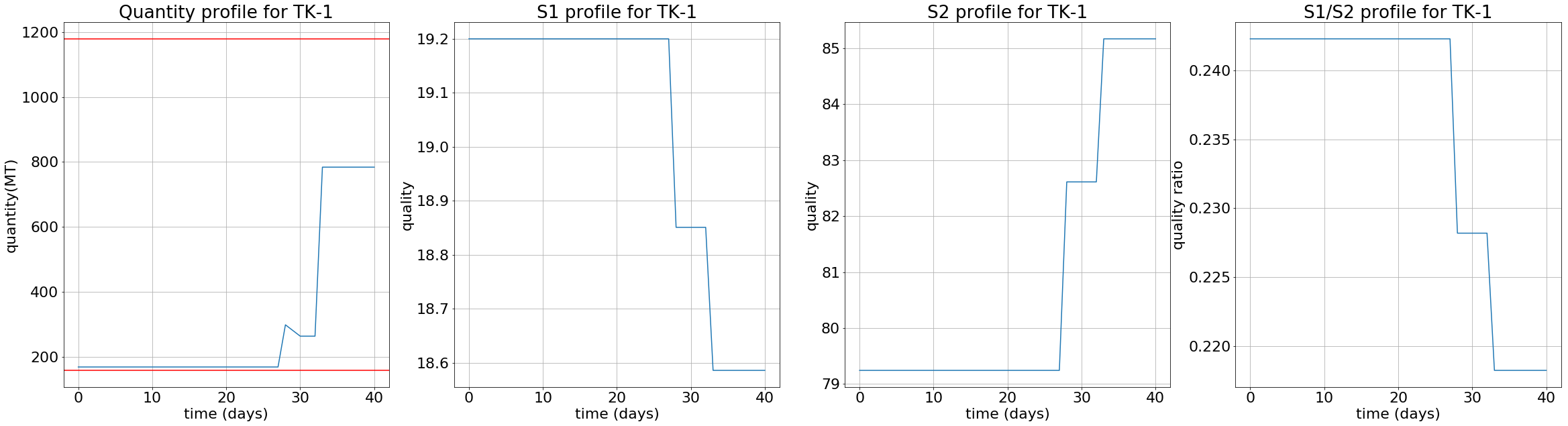}\\
         \includegraphics[scale = .2]{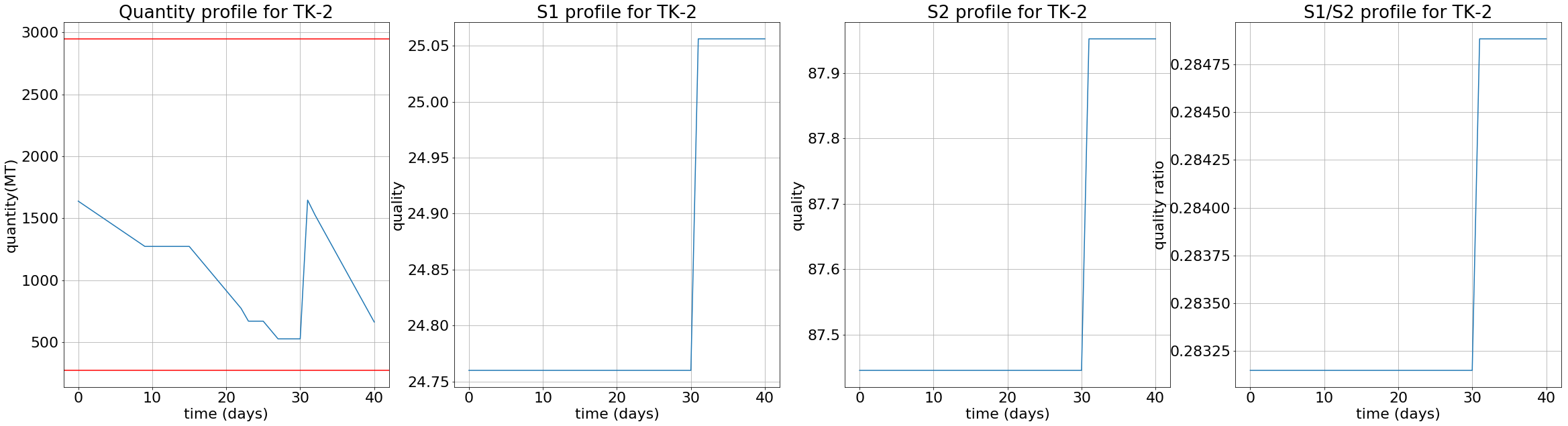}\\
         \includegraphics[scale = .2]{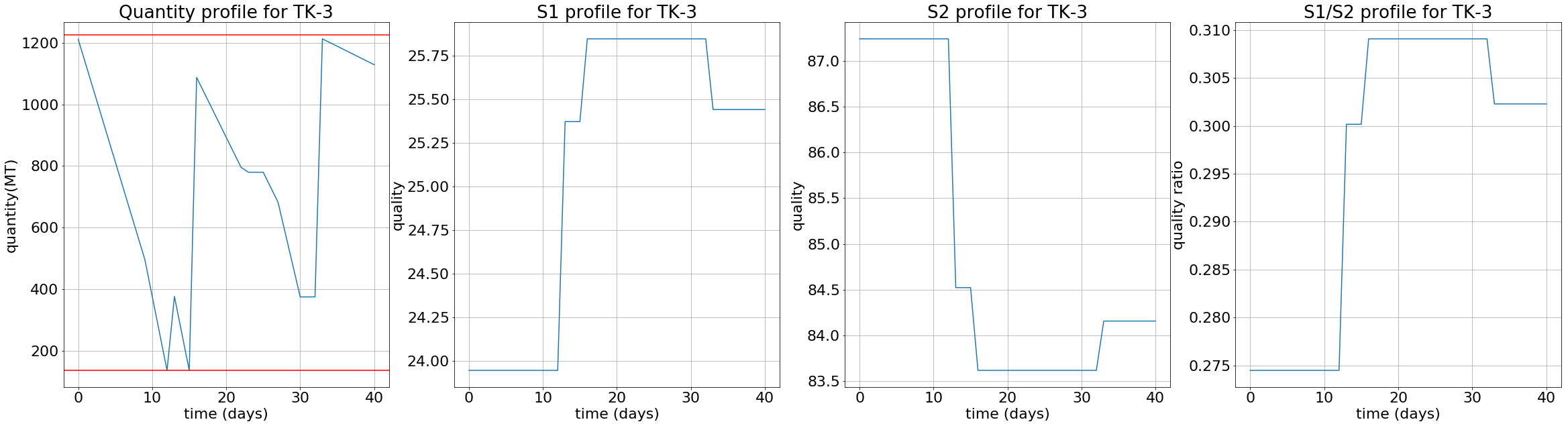}\\
         \includegraphics[scale = .2]{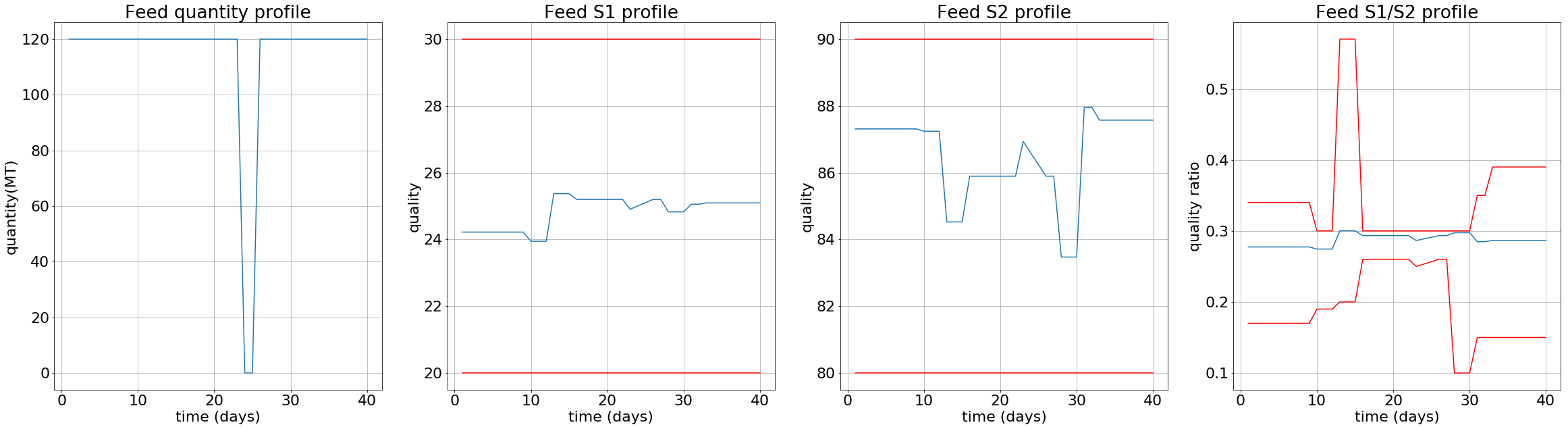}
    \end{tabular}
    \caption{Volume profiles with some bounds describing allowable ranges of spec and spec ratios.}
    \label{fig:sln_profiles}
\end{figure}

\subsection{Comparison with McCormick}
To perform our computational tests, we obtained three years of demand data for the same system, and generated randomized supply data for those three years. We generated ten different such randomizations, and performed each computational test with five different starting dates (separated by half a year), obtaining a total of fifty test problems for each scheduling horizon.

In this section, we compare the effectiveness of two different methods for handling the product terms $v\cdot \Delta f$: McCormick and the proposed \Center method, each using a base-2 binarization of the specs. The tests were repeated using several different values of $H$, and with several values for $\hat{\varepsilon}$ (chosen to be the same for all specs). For $\hat{\varepsilon} = 1$, we used $H \in \{10, 20, 30, 45, 70, 90\}$, and for $\hat{\varepsilon} = 0.25$, we used $H \in \{10, 20, 30, 45, 60\}$ (since McCormick became computationally prohibitive over longer scheduling horizons). A total time limit of $600$ seconds was enforced for all computations in this section. The average-case results are shown in Figure~\ref{fig_benchmarking-NMDT}. In this section, we investigate performance in terms of computation time and percentage of value lost (\%loss) in the objective, with respect to solution values $\MISINB_s$ and $\MIS_t$, computed via
\begin{subequations}
    \begin{align}
        val^{\text{target}} &= \dsum_{s \in S} \misInbPenalty_s \cdot \invInb_s + \sum_{t \in T} \misPenalty_t \cdot \demand_t \label{eq_pctloss_target}\\
        val^{\text{missed}} &= \dsum_{s \in S} \misInbPenalty_s \cdot \MISINB_s + \sum_{t \in T} \misPenalty_t \cdot \MIS_t \label{eq_pctloss_mis}\\
        \%\text{loss} &= \dfrac{val^{\text{missed}}}{val^{\text{target}}} \cdot 100\% \label{eq_pctloss}
    \end{align}
\end{subequations}
In all cases, we see that the proposed \Center method performs much faster than McCormick, with a difference of more than an order of magnitude for longer scheduling horizons. However, this performance improvement came at a cost: as shown in Figure~\ref{fig_benchmarking-NMDT} (b), the average optimality gap for $\hat{\varepsilon} = 1$ yielded by the \Center method is consistently quite a bit worse that that yielded by McCormick. As showcased in Figure~\ref{fig_benchmarking-NMDT-profile}(c), the majority of this difference seems to be caused by significantly worse solutions in roughly a quarter of the test cases. However, this disadvantage disappears at higher fidelity; with $\hat{\varepsilon}=0.25$, the optimality results are very close (well within Gurobi's default optimality gap of $10^{-4}$), with the \Center method taking a slight edge over McCormick. At the same time, the \Center method proved to yield similar feasibility results as McCormick, with each becoming infeasible for no more than one out of the 50 trials in all test cases.

Figure~\ref{fig_benchmarking-NMDT-profile} gives another perspective on the performance advantage of the proposed \Center method, based on the number of instances solved by a given time for the longest scheduling horizons tested: for $\hat{\varepsilon}=1$ and $H=90$, the advantage is overwhelming: almost all of the 50 test instances have converged for the \Center method before any have converged for McCormick. The results for $\varepsilon=0.25$, $H=60$ also show a strong advantage for the proposed method, though this advantage is somewhat less pronounced due to the shorter scheduling horizon.

These results suggest that, for longer scheduling horizons, the new \Center idea fares far better computationally than does the McCormick-style discretization in NMDT for the same level of precision. 

\begin{figure}[H]
	\centering
  \begin{tabular}{cc}
    \includegraphics[scale=0.55]{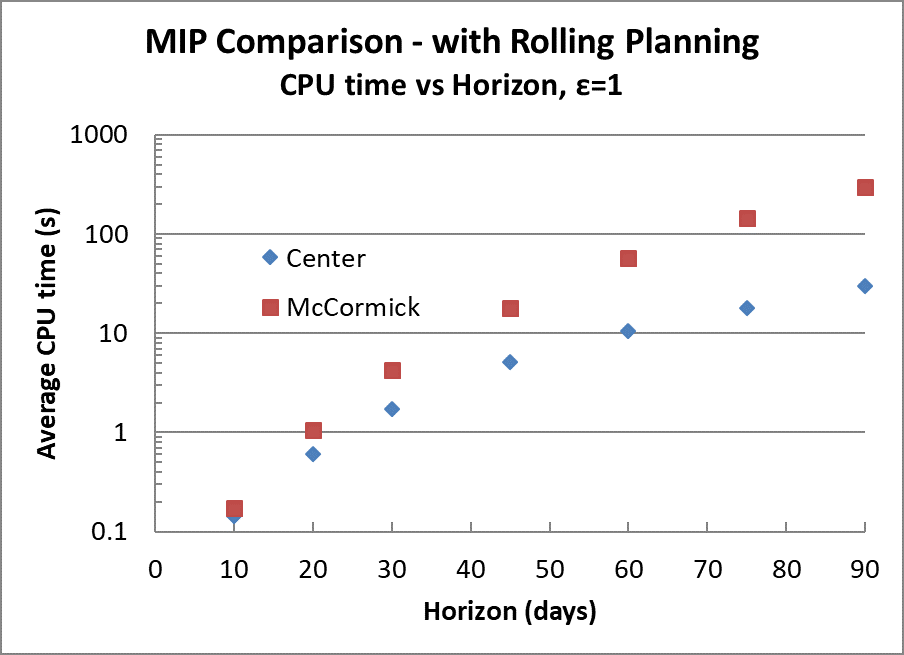} & \includegraphics[scale=0.55]{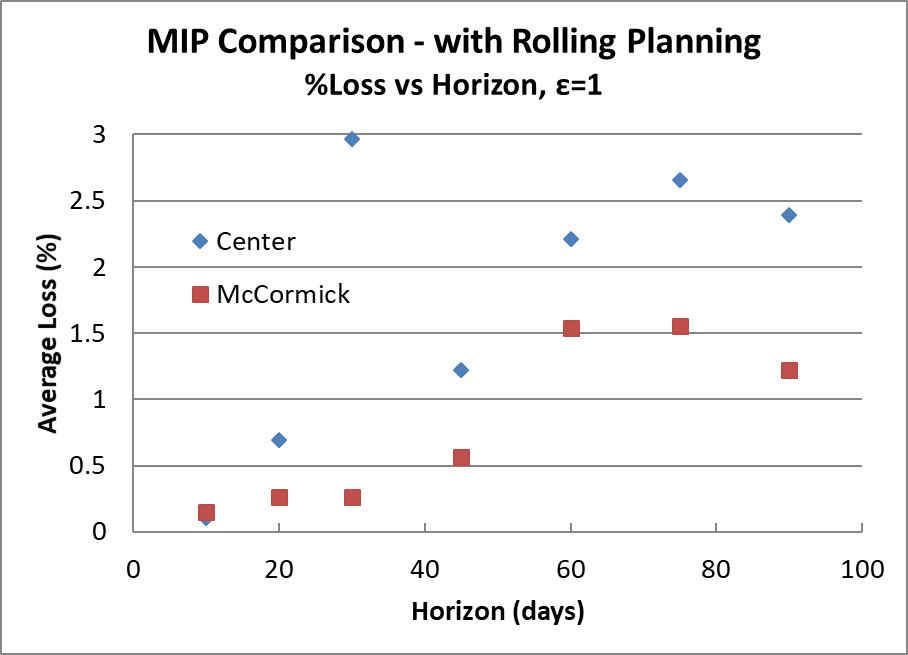}\\
    (a) & (b) \\
    
    \includegraphics[scale=0.55]{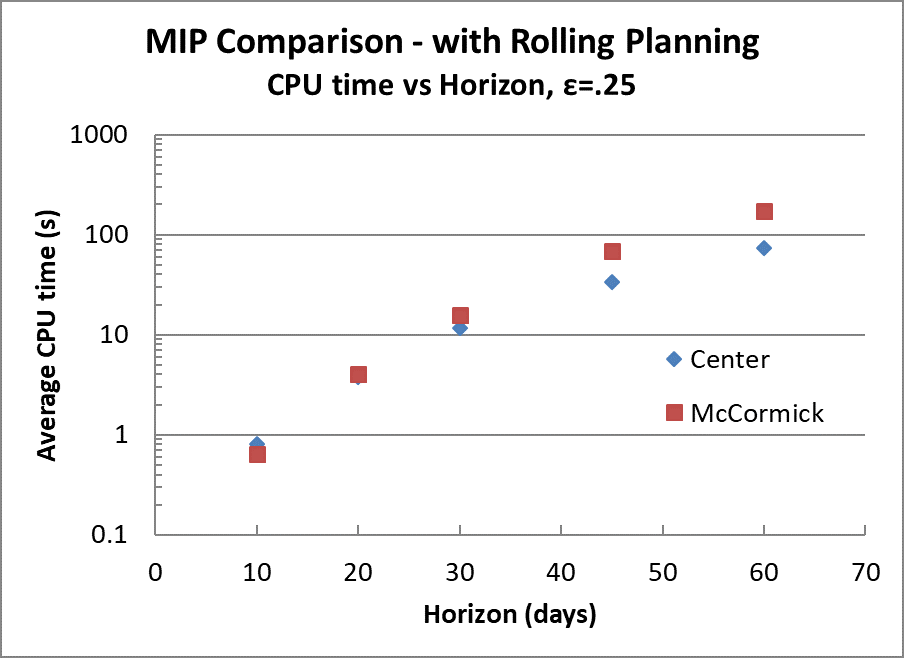} & \includegraphics[scale=0.55]{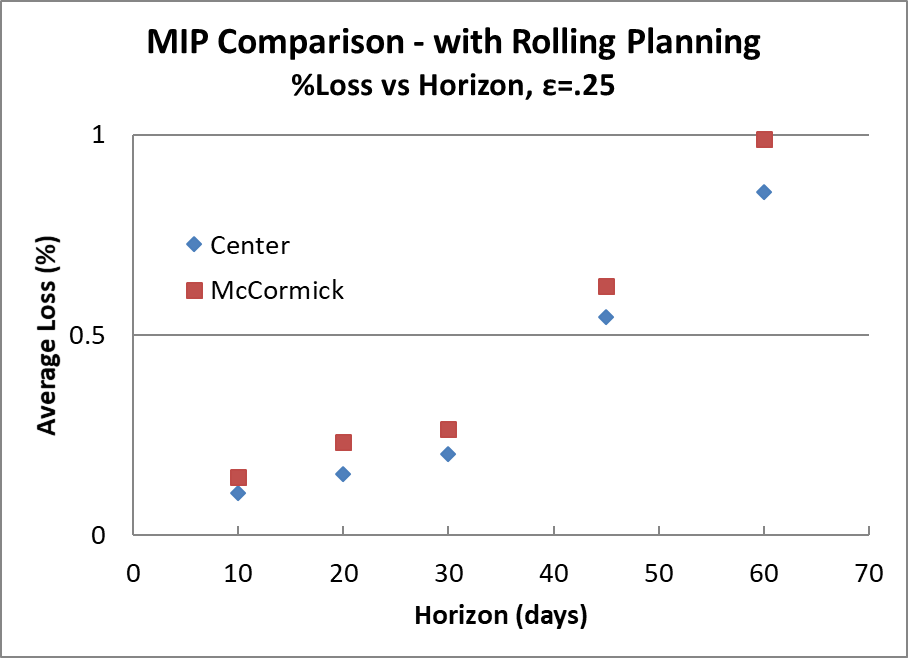}\\
    (c) & (d) 
  \end{tabular}
\caption{Comparison of computational performance of the \Center approximation vs. the McCormick relaxation. Averaged over 50 similar instances with different randomized supply parameters. (a) and (c): Computational time using step sizes of $\hat{\varepsilon} = 1$ and $0.25$. (b) and (d): \% Loss computed as fraction of upper bound on objective value not obtained (see \eqref{eq_pctloss}), where model accuracy is set to $\hat{\varepsilon} = 1$ and $0.25$.}
  \label{fig_benchmarking-NMDT}
\end{figure}

\begin{figure}[H]
	\centering
  \begin{tabular}{cc}
    \includegraphics[scale=0.55]{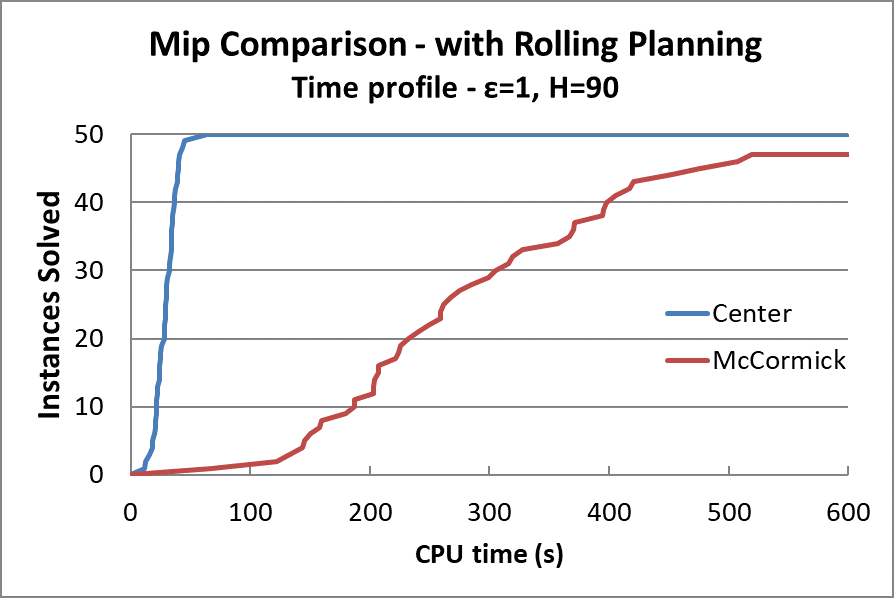} & \includegraphics[scale=0.55]{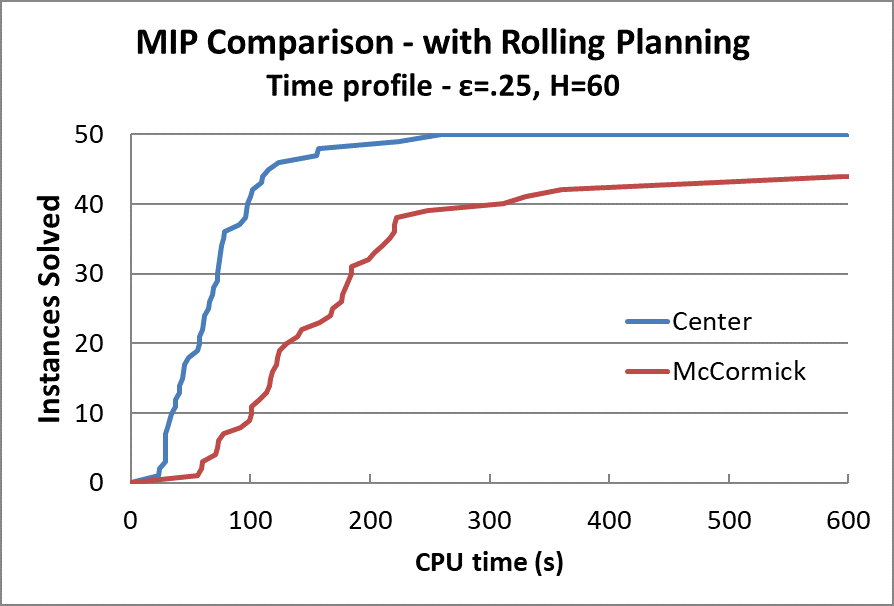}\\
    (a) & (b) \\
    \includegraphics[scale=0.55]{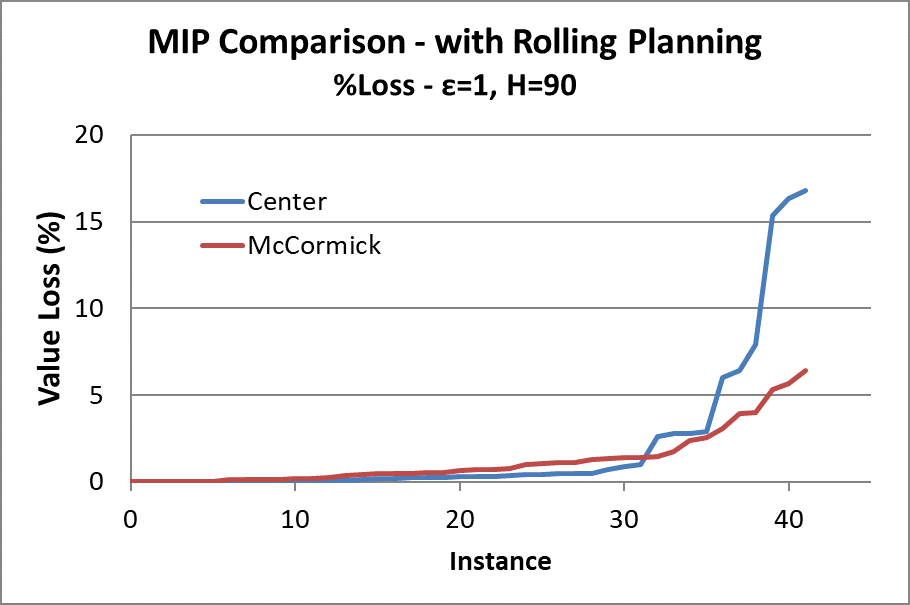} \\
    (c) 
  \end{tabular}
\caption{Time-completion (a,b) and \% value missed (c) profiles for the comparison of computational performance for McCormick vs. the proposed approximate method, using the largest values of $H$ tested for (a,c) $\hat{\varepsilon}=1$, and (b) $\hat{\varepsilon}=0.25$.}
  \label{fig_benchmarking-NMDT-profile}
\end{figure}

Without rolling planning, the difference is far less pronounced: using 30-day and 40-day scheduling horizons with fidelity of $\hat{\varepsilon} = 1$, we find little difference between the performance of the two methods, as showcased in figure Figure~\ref{fig_benchmarking-NMDT-noRP-profile}. As a significant performance difference between the methods had already manifested by such small scheduling horizons in the cases with rolling planning, we find that, for these problem instances, the primary advantage with the Approx method lies in its interaction with rolling planning. 

\begin{figure}[H] 
	\centering
  \begin{tabular}{cc}
    \includegraphics[scale=0.55]{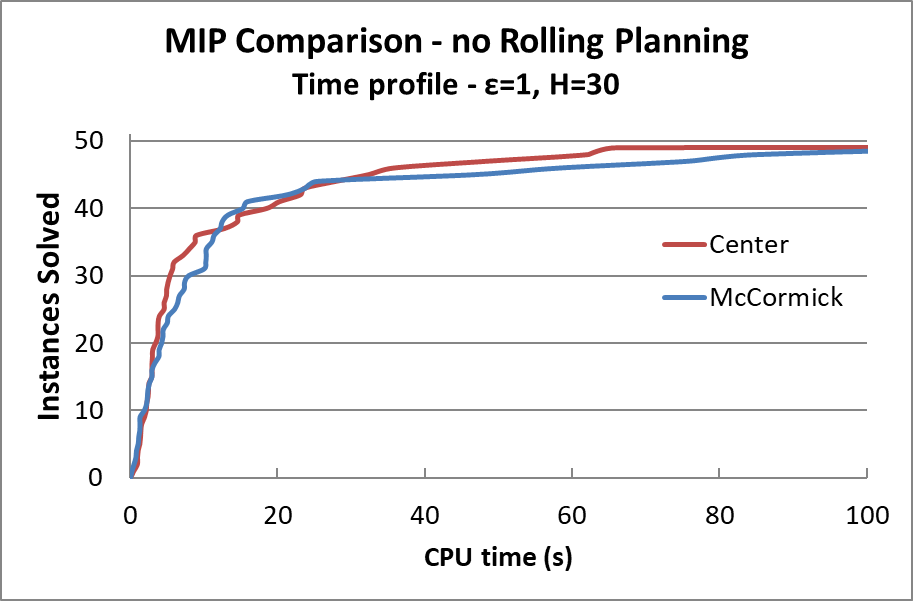} & \includegraphics[scale=0.55]{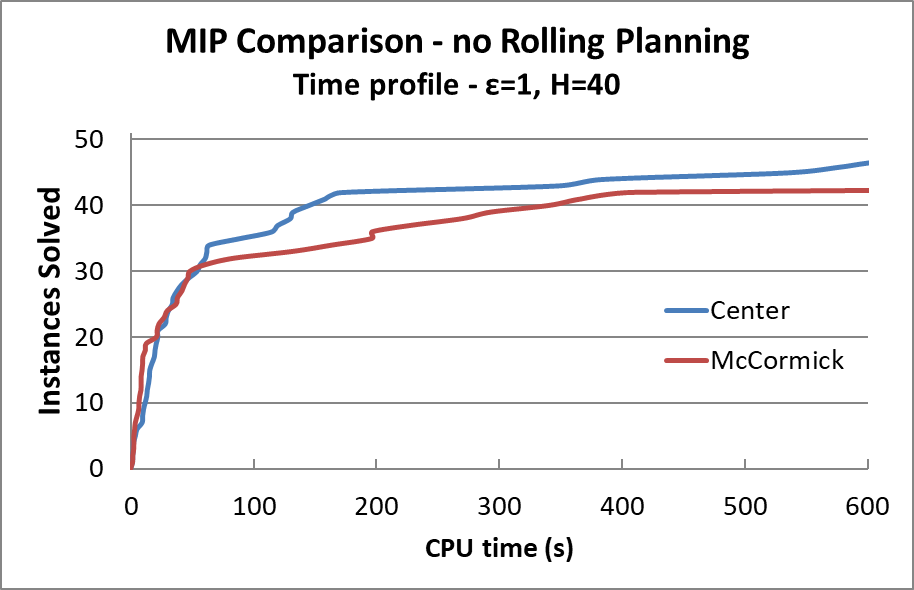}\\
    (a) & (b) \\
  \end{tabular}
\caption{Time-completion profiles for the comparison of computational performance for NMDT vs. the proposed approximate method, using $\varepsilon = 1$ tested for (a) $H=30$ and (b) $H=40$.}
  \label{fig_benchmarking-NMDT-noRP-profile}
\end{figure}

\subsection{Comparison to Nonconvex MIQCP in Gurobi 9.0 }\label{ssec_grbcomp}
In this section, we compare the computational performance of the methods presented, using $\varepsilon=1$ and without rolling planning, to solving the models MINLP-Mix and MINLP-Split defined in  Section~\ref{ssec_prod,ssec_split} with Gurobi 9.0, using scheduling horizons of $H=20$, $H=30$, and $H=40$. The MINLP-Mix model timed out (5-minute limit) for $H=30$ on all instances but one, so we omit it from our comparisons beyond the first plot with $H = 20$.   
The results are shown in Figure~\ref{fig_benchmarking-GRB-noRP-profile}. Note that the MINLP-Mix results are omitted from (b), as many instances also didn't finish for $H=20$. 

The performance of MINLP-Mix is far inferior to that of MINLP-Split: For $H=20$, $\sim$44 instances finished for MINLP-Split within 50s, while only 10 had finished by that point for MINLP-Mix, with only 19 finishing within 5 minutes. For $H=30$, no more than a single instance finish within 5 minutes for MINLP-Mix.

It is interesting to note that, in quite a few instances, Gurobi is able to solve the MINLP-Split model very quickly ($\sim$26 for $H=20$, $\sim$21 for $H=30$, and $\sim$15 for $H=40$. The results show that more instances get stuck with long solution times than with the NMDT-based options, with the slowest $40$-$60\%$ of solutions slower than the NMDT-based methods. However, the solutions it returns are precise in terms of spec feasibility, while the level of solution precision $\varepsilon=1$ for the discrete options is quite coarse. It seems likely that, using to $\varepsilon=0.5$ or $\varepsilon=0.25$, the Gurobi-based solver may overtake the NMDT-based variants in both solution time and quality for these methods. As a result, one might imagine that a rolling planning scheme based on this MINLP-split model, e.g. relaxing the bilinear constraint \eqref{eq_split} for the `future' portion of the horizon, could be quite fruitful. We leave this exploration as future work.

The \%loss results show that, even at such course fidelity, the approximation scheme without rolling planning consistently yields near-optimal results. Indeed, the relative difference between the solutions is consistently within the MIP-gap, 0.5\%, used for the optimization.
\begin{figure}[H]
	\centering
  \begin{tabular}{cc}
  \underline{Computation Time} & \underline{\% Loss} \\
  \\
    \includegraphics[scale=0.21]{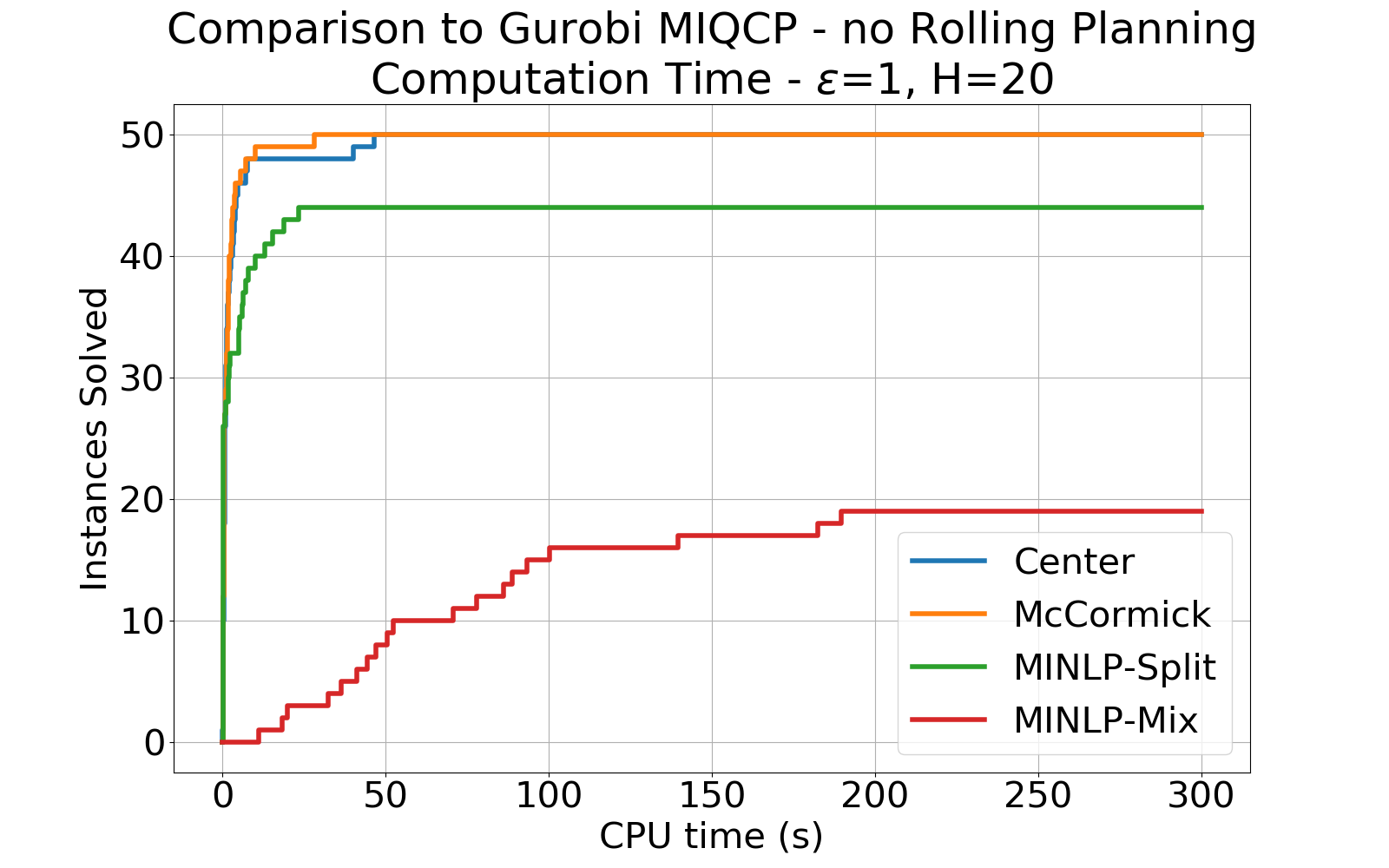} & \includegraphics[scale=0.21]{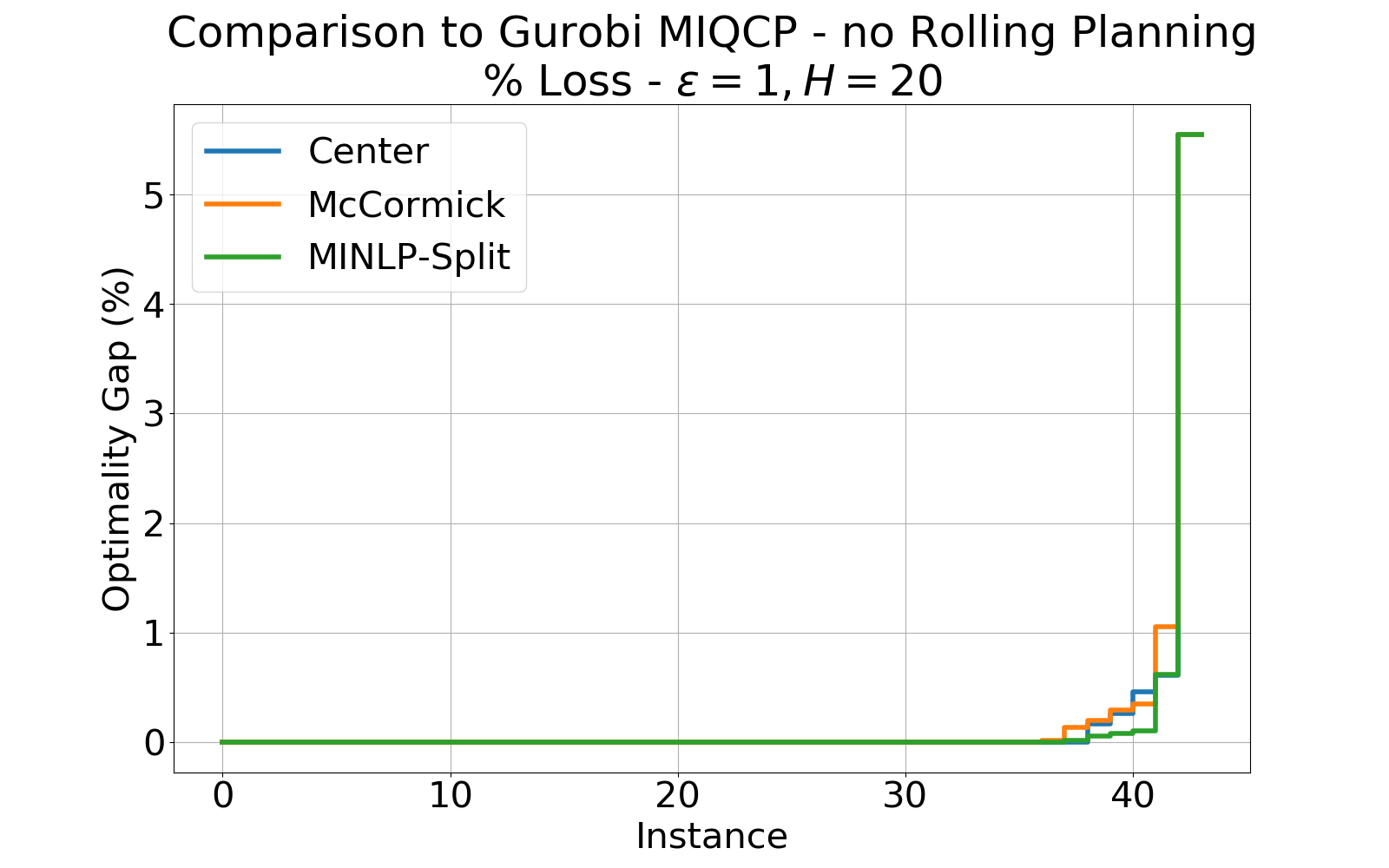}\\
    (a) & (b) \\
    \includegraphics[scale=0.21]{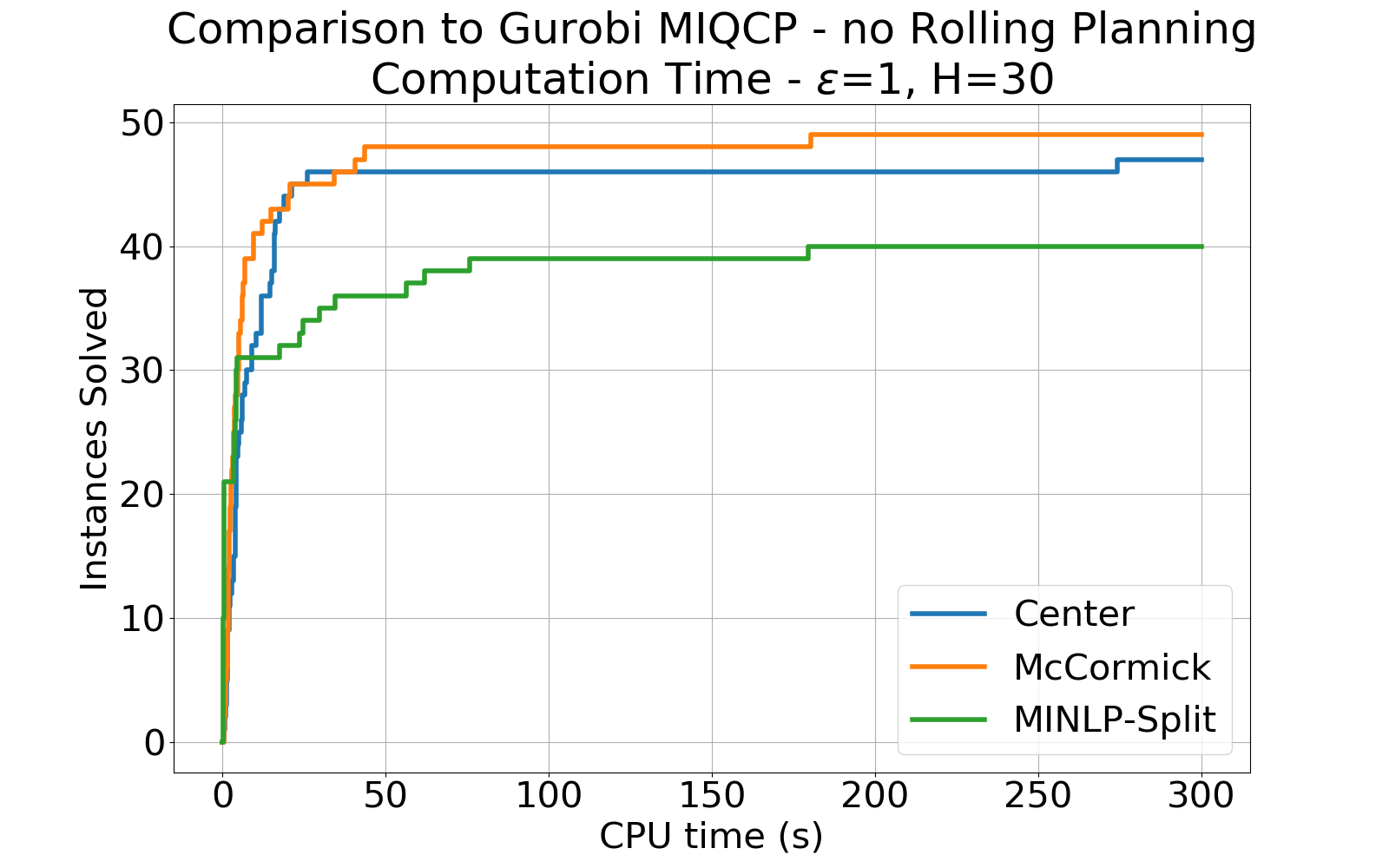} & \includegraphics[scale=0.21]{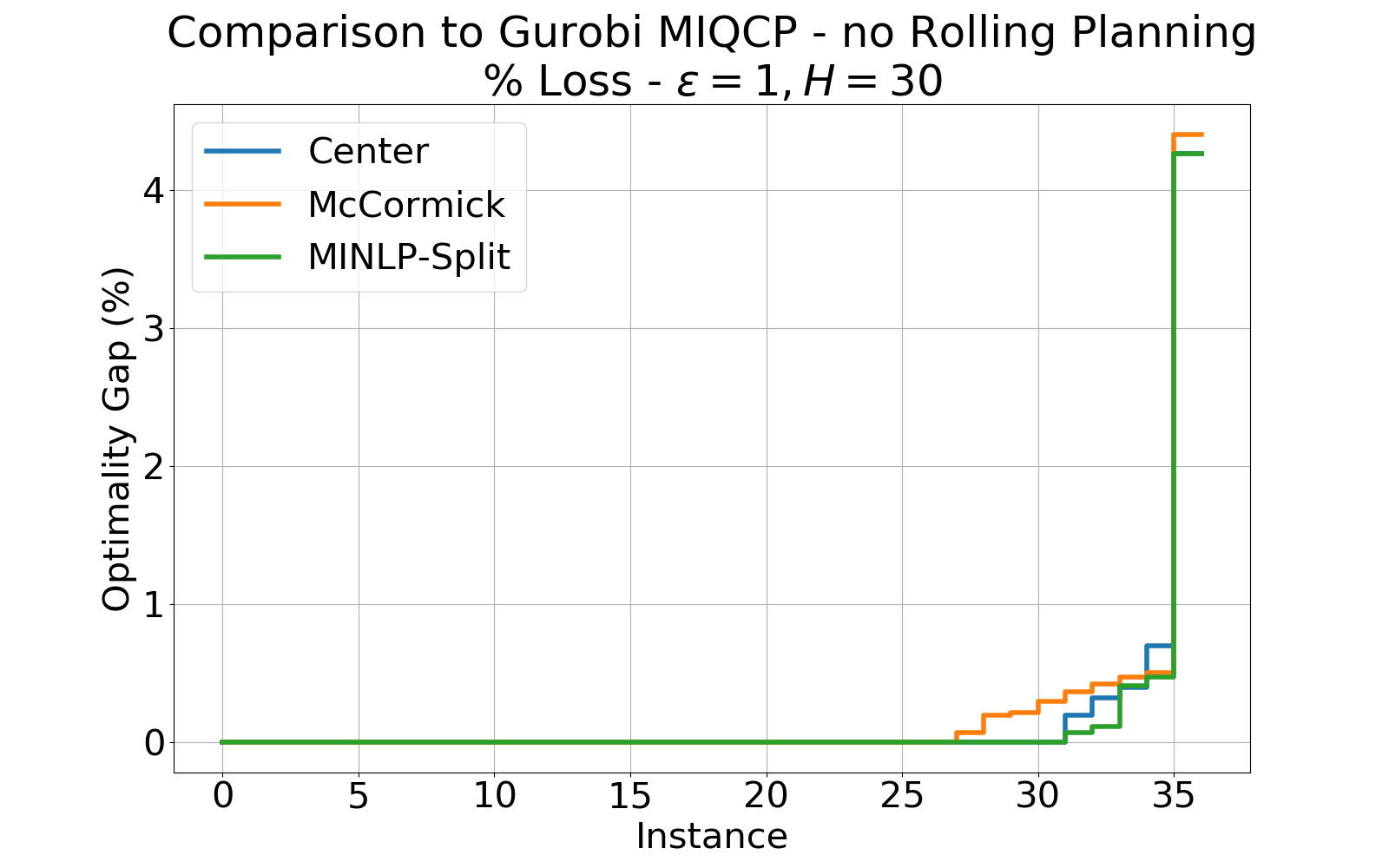}\\
    (c) & (d) \\
    \includegraphics[scale=0.21]{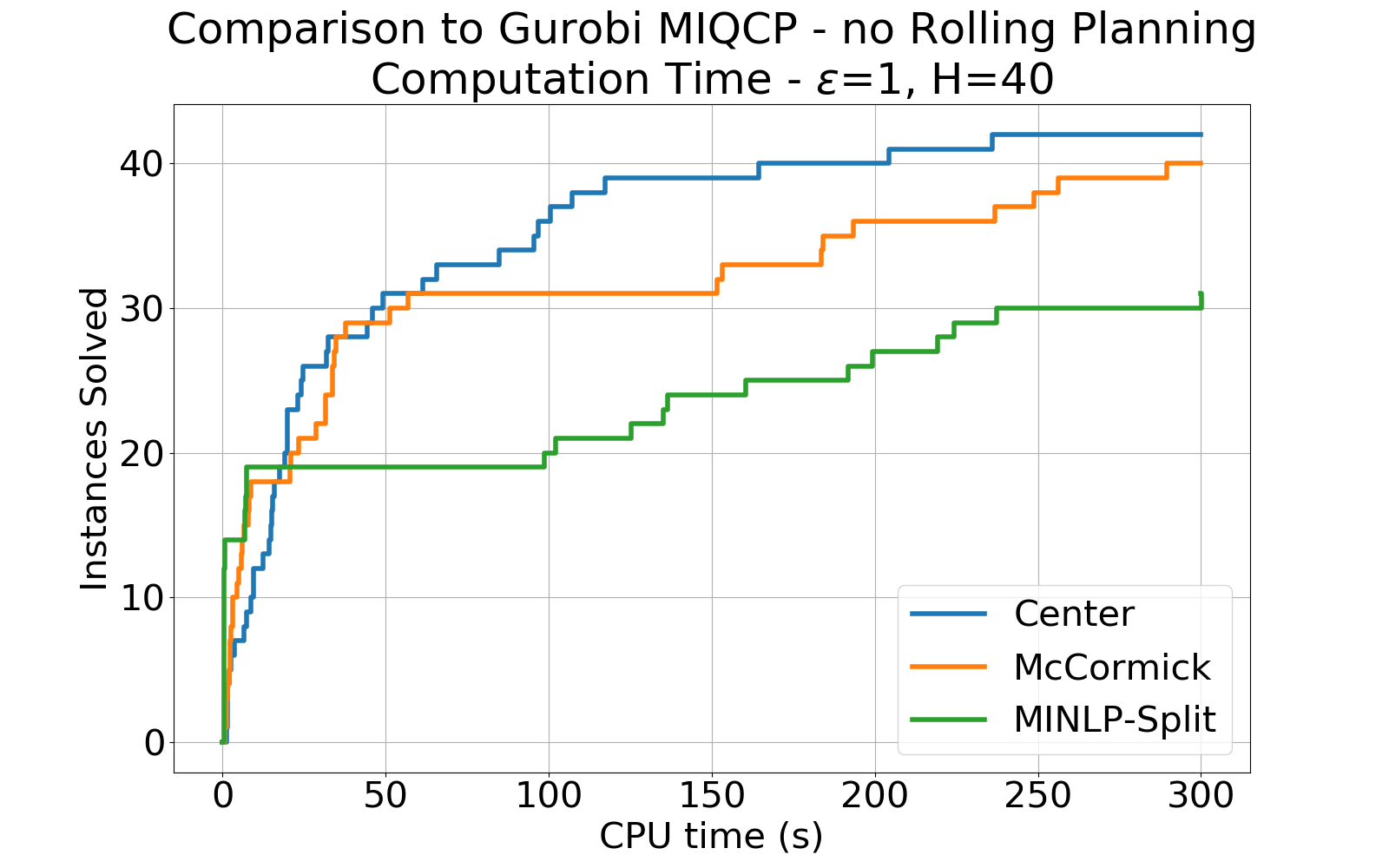} & \includegraphics[scale=0.21]{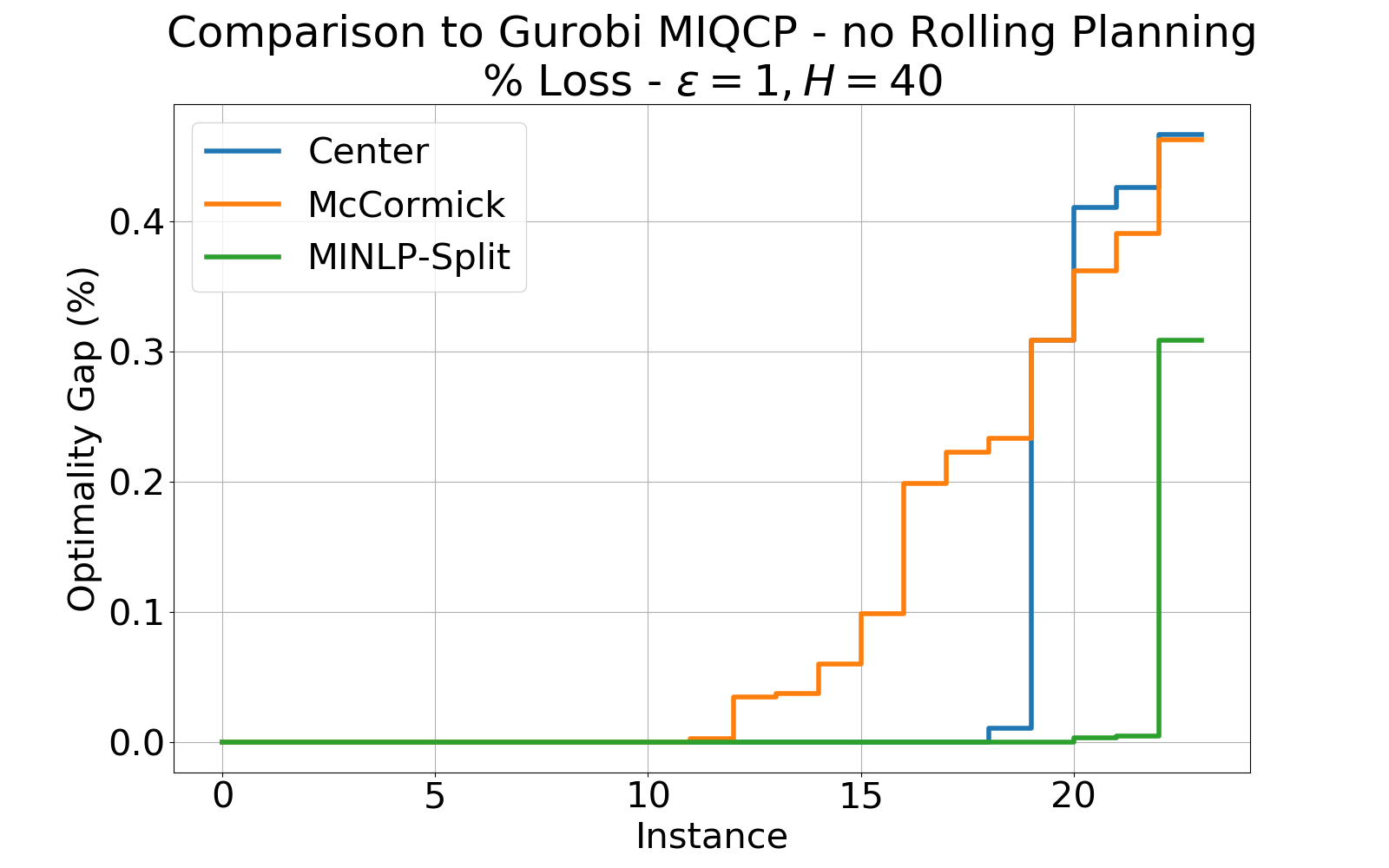}\\
    (e) & (f) \\
  \end{tabular}
\caption{Time-completion (a,c,e) and percent-loss (b,d,f) profiles for the comparison of computational performance for Gurobi 9.0.1 vs. the proposed NMDT-based methods, using $\varepsilon = 1$ tested for (a,b) $H=20$, (c,d) $H=30$, and (e,f) $H=40$.}
  \label{fig_benchmarking-GRB-noRP-profile}
\end{figure}

\subsection{Performance Enhancement}
We explore how to enhance the performance of the method via two simple changes to the \Center model that preserve the mixed-integer feasible solutions. The first of these is to add the coupling constraints described in  Section~\ref{sssec_MILP_var_elim}
\begin{subequations}
    \begin{align}
         & \AVOLSPL_{i,j} = \AVOLINV_{i,j} + \AVOLFEED_{i,j} & i \in I_{k,q,t}, j \in \{0,1\} \label{eq_AVOL_couple}
    \end{align}
\end{subequations}
where $I_{k,q,t}$ is defined as in Section~\ref{sssec_approx_method}. We then remove constraints \eqref{eq_AVOLSPLdef_lb} and \eqref{eq_AVOLINVdef_ub}, since they are now made redundant in the associated LP by \eqref{eq_AVOL_couple} combined with the nonnegativity of $\AVOLSPL_{p,i,1}$, $\AVOLINV_{p,i,1}$, and $\AVOLFEED_{p,i,1}$.\\

The second of these is to relax the MILP approximation of the $\AVOLSPL_{p,i,j}$ terms. This corresponds then removing constraints \eqref{eq_AVOLSPLdef_lb}, \eqref{eq_AVOLINVdef_ub}, and the constraints
\begin{subequations}
\begin{align}
& \AVOLINV_{i,j} \ge \underbar{v}^{\min}_k \cdot \alpha_{i,j}  &   i \in I_{k,q,t}, j \in \{0,1\}, \label{eq_AVOLINVdef_lb}\\
& \AVOLFEED_{i,j} \le \demand_t \cdot \alpha_{i,j}   &   i \in I_{k,q,t}, j \in \{0,1\}, \label{eq_AVOLFEEDdef_ub}
\end{align}
\end{subequations}
which are part of the representation of \eqref{eq_tildeD_vmid} and \eqref{eq_tildeD_yout}. The viability of the removal of \eqref{eq_AVOLFEEDdef_ub} is shown in Section~\ref{sssec_MILP_var_elim}: since any one of the three associated variables $\AVOLSPL_{p,i,1}$, $\AVOLINV_{p,i,1}$, and $\AVOLFEED_{p,i,1}$ can be eliminated completely without compromising the MILP, a valid model is still obtained when relaxing any number of constraints related to the definition of $\AVOLFEED_{p,i,1}$. Constraint $\eqref{eq_AVOLINVdef_lb}$ is always safe to remove, as it is implied by the fact that only one $\AVOLSPL_{i,j}$ variable can nonzero for each $i$, combined with the fact that $\AVOLSPL_{i,j}$ sum to $\SPL_{k,t}$, which is bounded below by $\underbar{v}^{\min}_k$. Note that either  \eqref{eq_AVOLINVdef_ub} or \eqref{eq_AVOLFEEDdef_ub} must be included if the coupling constraints \eqref{eq_AVOL_couple} are not included. In this case, we choose to remove \eqref{eq_AVOLFEEDdef_ub}.

The results, displayed in Figure~\ref{fig_benchmarking-relax-profile}, show that while both modeling changes make a large difference in computational cost, the simple relaxation of the MILP yields a larger improvement in computational time. This improvement is increased further when the coupling constraints are added, yielding a method in which the majority of instances finish within the desired 10 minutes when using a scheduling horizon of a full year with $\hat{\varepsilon} = 1$.

\begin{figure}[H]
	\centering
    \includegraphics[scale=0.55]{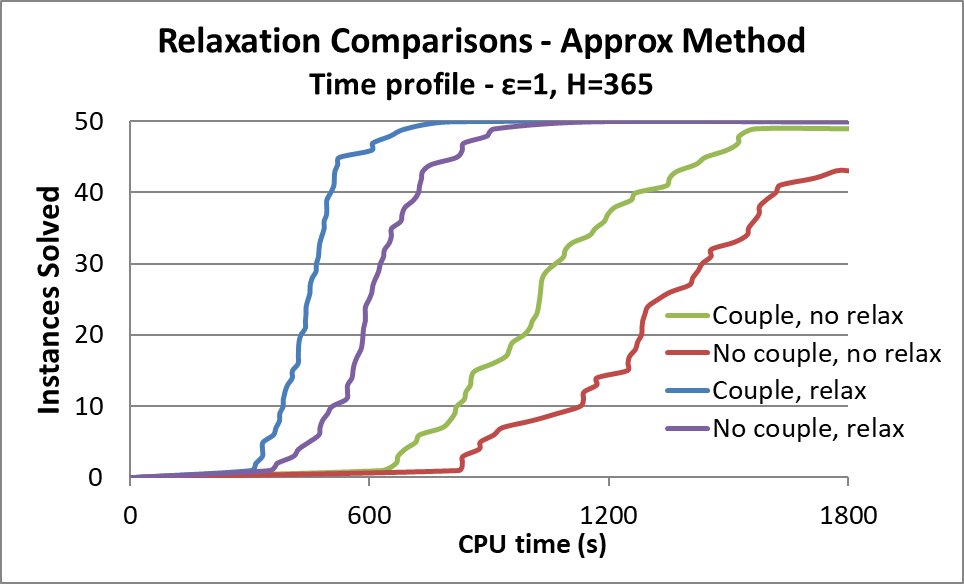}
\caption{Time-completion profiles using different combinations of relaxing (constraints) and adding (coupling constraint), with $H=365$ and a 7-1-1 fixed-step-size rolling planning scheme.}
  \label{fig_benchmarking-relax-profile}
\end{figure}

\subsection{Comparison of Rolling Planning Ideas}
In this section, we compare the computational performance of different rolling planning regimes. We restrict ourselves here to the full-horizon scheme, and compare the performance obtained using two different ideas. For the first, we use fixed-length periods with $H^{\text{int}} = 7$. For the second, we use run-based periods using $\tilde{H}^{\text{int}} = 4$ to generate the periods, with $n^H=\Delta n = 2$ as our stepping parameters. We compare the methods using a time limit of $1800s$ using $\hat{\varepsilon} = 1$ and scheduling horizons of $H \in \{60, 75, 90, 129, 180, 365\}$ (in days).

The results, as shown in Figure~\ref{fig_benchmarking-rolling}, indicate that the regime with a fixed step size yields a more consistent, and often lower, optimality gap on average while incurring a higher computational cost. This outcome is emphasized in Figure~\ref{fig_benchmarking-RP_profiles}: when $H=365$, the 7-1-1 fixed-step scheme results in nearly all test cases finishing within about a 5-10 minute time window, while incurring high optimality gaps in a handful of instances. On the other hand, for the 4-2-2 run-dependent-step scheme, most instances finish within about a 10-20 minute time window, but optimality gap for the worst instances is far more controlled.

\begin{figure}
	\centering
  \begin{tabular}{cc}
    \includegraphics[scale=0.55]{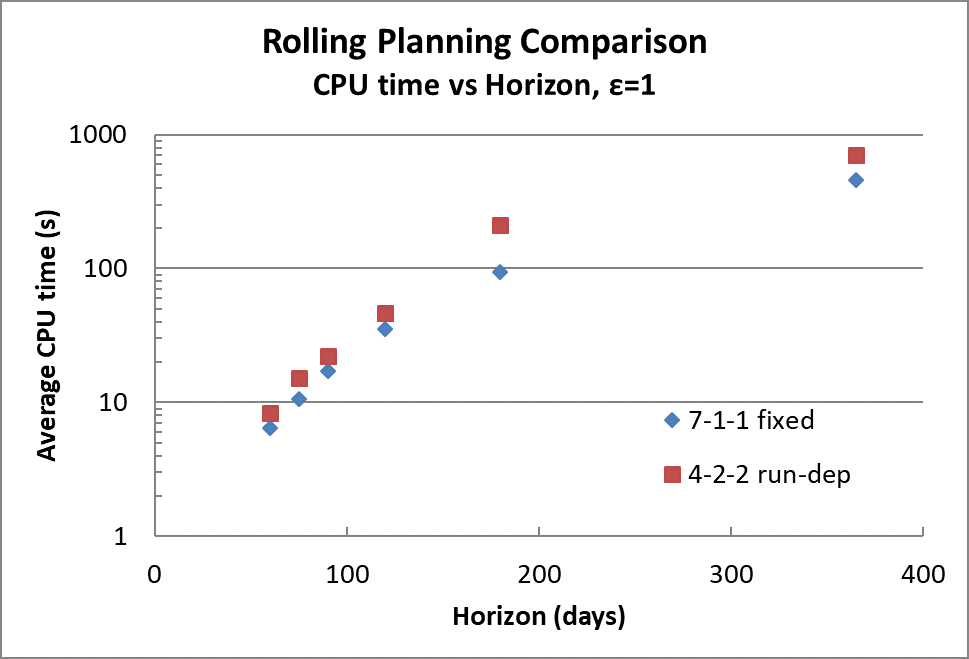} & %
    \includegraphics[scale = 0.55]{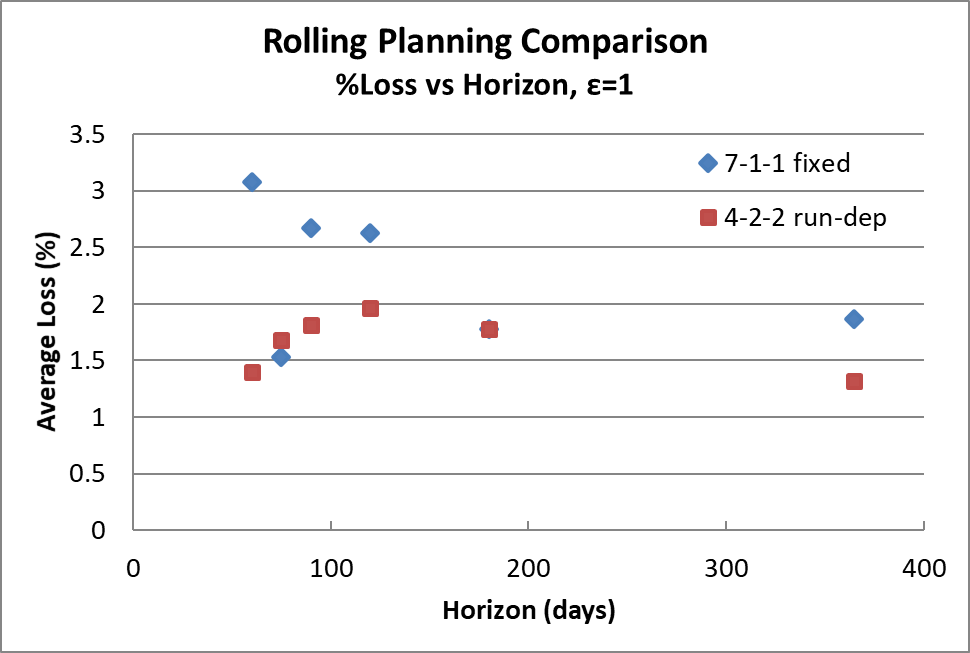}\\
    (a) & (b) \\
  \end{tabular}
\caption{Comparison of fixed-step-length and run-dependent-step rolling planning methods. Averaged over 50 similar instances with different randomized supply parameters, using spec step size $\hat{\varepsilon}=1$. (a): Computational cost, (b) Optimality gap.}
  \label{fig_benchmarking-rolling}
\end{figure}

\begin{figure}
	\centering
  \begin{tabular}{cc}
    \includegraphics[scale=0.55]{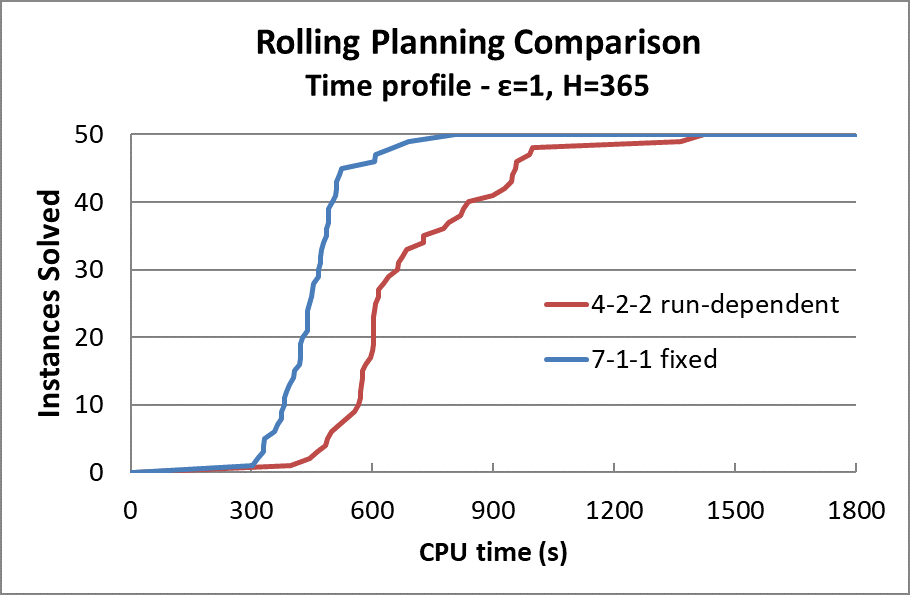} & \includegraphics[scale=0.55]{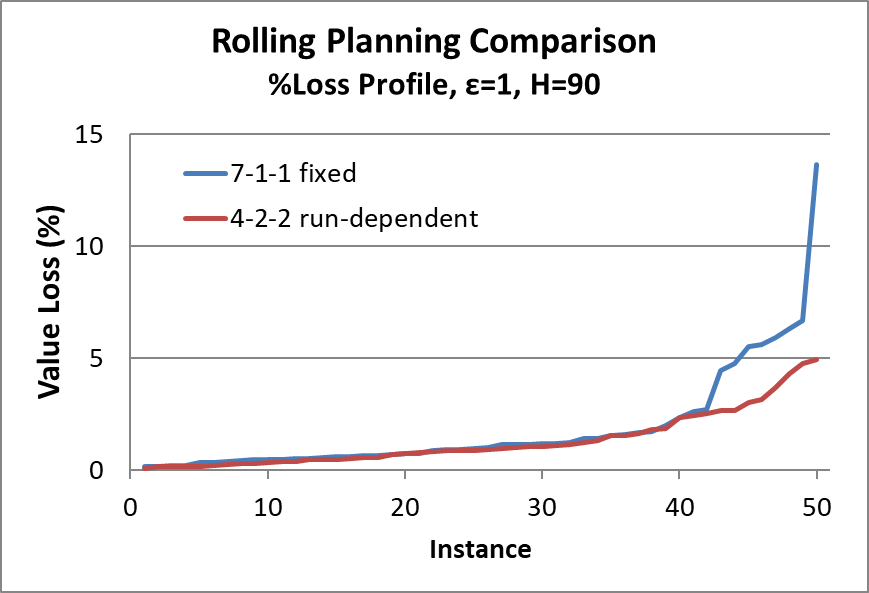}\\
    (a) & (b) \\
  \end{tabular}
\caption{Time-completion profiles of fixed-step-length and run-dependent-step rolling planning methods, using $H = 365$ days and $\hat{\varepsilon}=1$. Run using both the relaxation and coupling things. (a) Computational cost, (b) Optimality gap.}
  \label{fig_benchmarking-RP_profiles}
\end{figure}

\newpage
\section{Conclusion}
In this work, we have developed an approximation method for the tank mixing and scheduling problem that combines rolling horizon planning with two different discretization schemes, yielding reasonably high-quality solutions very quickly over long scheduling horizons, while with high probability ensuring that demand specs remain within the required ranges. Due to this performance, the model could be especially useful for obtaining rough, fast plan feasibility results during the planning of supply acquisitions and product production runs in an industry setting (counting solutions with no (or only small) supply and demand inventory misses as `feasible').\\ %

Comparing the in-house ``Center'' discretization scheme to the original ``McCormick''-based NMDT scheme \cite{Castro2015c}, we find that the in-house method yields much faster performance when combined with a rolling horizon scheme, while sometimes yielding worse solutions. However, without a rolling horizon scheme, the methodologies yield comparable performance, suggesting that the ``Center'' scheme may yield relaxations more compatible with a rolling horizon scheme. Further, when comparing rolling horizon schemes, we find that a scheme using run-based rolling horizon periods yields better solutions than a fixed-duration scheme, at some cost to performance. To extend this work, we plan to explicitly incorporate acquisition planning of barges, and possibly planning of production runs.

\bibliographystyle{plain}
\bibliography{bibliography}

\end{document}